\pgfplotsset{compat=1.15}
\numberwithin{equation}{section}
\newcommand{\g}{\mathfrak{g}}
\newcommand{\n}{\mathfrak{n}}
\newcommand{\ka}{\mathfrak{k}}
\newcommand{\p}{\mathfrak{p}}
\newcommand{\C}{\mathbb{C}}
\newcommand{\R}{\mathbb{R}}
\newcommand{\N}{\mathbb{N}}
\newcommand{\tr}{\operatorname{tr}}
\newcommand{\Aut}{\operatorname{Aut}}
\newcommand{\SO}{\operatorname{SO}}
\newtheorem{theorem}{Theorem}[section]
\newtheorem{lemme}[theorem]{Lemma}
\newtheorem{prop}[theorem]{Proposition}
\newtheorem{example}[theorem]{Example}
\newcommand{\Pol}{\operatorname{Pol}}
\newcommand{\derpone}[2]{\frac{\partial #1}{\partial {#2}}}
\newcommand{\derptwo}[2]{\frac{\partial^2 #1}{\partial {#2}^2}}
\newcommand{\derptwobis}[3]{\frac{\partial^2 #1}{\partial #2 \partial #3}}
\newtheorem{remax}[theorem]{Remark}
\newtheorem{theoremIntro}{Theorem}
\DeclareMathOperator{\GL}{GL}
\DeclareMathOperator{\PSL}{PSL}
\DeclareMathOperator{\Lie}{Lie}
\DeclareMathOperator{\id}{id}
\DeclareMathOperator{\diag}{diag}
\DeclareMathOperator{\hatotimes}{\hat{\otimes}}
\title{Restricting holomorphic discrete series\\representations to a compact dual pair}
\author{Jan Frahm, Quentin Labriet}
\date{}
\begin{document}

\maketitle

\begin{abstract}
The goal of this article is to study the branching problem for a holomorphic discrete series representation of the conformal group of a simple Euclidean Jordan algebra $V$ restricted to the subgroup $\PSL_2(\R)\times\Aut(V)$ where $\Aut(V)$ denotes the compact group of automorphisms of $V$. We use a realization of the holomorphic discrete series on a space of vector-values $L^2$-functions as well as the \emph{stratified model} developed by the second author to relate the branching problem to the decomposition of certain representations of the compact group $\Aut(V)$ and to vector-valued orthogonal polynomials.
\end{abstract}

\section*{Introduction}

The restriction of an irreducible unitary representation of a Lie group $G$ to a non-compact closed subgroup $G'$ does in general not decompose into a direct sum of irreducible representations of $G'$, but rather into a direct integral. However, there are some pairs of groups $(G,G')$ and classes of irreducible unitary representations of $G$ which do indeed decompose discretely when restricted to $G'$. A detailed study of such settings was initiated by Kobayashi~\cite{Kobayashi94,Kobayashi98,Kobayashi98b,Kobayashi08}, thus providing a suitable framework for discrete branching problems.

One consequence of Kobayashi's results is that any holomorphic discrete series representation of a real semisimple group $G$ of Hermitian type decomposes discretely when restricted to a subgroup $H$ of Hermitian type whose corresponding Riemannian symmetric space embeds holomorphically into the one for $G$. If $(G,G')$ is a symmetric pair, he even provides a formula for the multiplicities occurring in the decomposition. In this paper, we study a family of subgroups $G'$ which are, in general, not symmetric, but still fall into the framework of discrete decomposability. The subgroups are of the form $G'=\PSL_2(\R)\times H$ for some compact group $H$, so that $\PSL_2(\R)$ and $H$ form a dual pair inside $G$.

In contrast to Kobayashi's multiplicity formula which is obtained by algebraic methods, our study is of a more analytic nature. Following Ding--Gross~\cite{DingGross93}, we realize the holomorphic discrete series representations of $G$ on a Hilbert space of vector-valued $L^2$-functions on a symmetric cone (assuming the group $G$ is of tube type). The key point is to use a particular set of coordinates that is adapted to the subgroup $G'$, yielding the so-called \emph{stratified model} introduced by the second author in \cite{Labriet22}. Loosely speaking, this set of coordinates separates the actions of $\PSL_2(\R)$ and $H$ and reduces the branching problem to one for the compact group $H$. It further relates the decomposition to certain (vector-valued) orthogonal polynomials on a real bounded domain. Using these orthogonal polynomials, we are able to give explicit formulas for the corresponding symmetry breaking resp. holographic operators, i.e. the intertwining operators that project onto the various discrete summands of the representation resp. embed the discrete summands into the holomorphic discrete series representation.

\paragraph{Statement of the results.}

Let us describe the results in more detail. Let $S_\pi$ be a holomorphic discrete series representation of the conformal group $G$ of a simple Jordan algebra $V$ associated to a representation $(\pi,V_\pi)$ of a maximal compact subgroup $K$ of $G$. Since $K$ and the automorphism group $L$ of the symmetric cone $\Omega$ of invertible squares in $V$ have isomorphic complexifications, we can consider $\pi$ to be a representation of $L$. Further, let $H=\Aut(V)$ be the group of automorphism of $V$, or alternatively the subgroup of $L$ fixing the identity element of the Jordan algebra $V$. The centralizer of $H$ in $G$ is isomorphic to $\PSL_2(\R)$, so the product $G'=\PSL_2(\R)\times H$ is a subgroup of $G$. We study the restriction of $S_\pi$ to $G'$.

Following \cite{DingGross93} and \cite{FrahmOlaffsonOrsted22}, we can realize $S_\pi$ on a space $L^2_\pi(\Omega)$ of $V_\pi$-valued functions on $\Omega$ which are square-integrable with respect to a certain operator-valued measure on $\Omega$. The second author introduced in \cite{Labriet22} a set of coordinates on $\Omega$ which are in this setting given by
\[ \iota:\R^+\times X \to \Omega, \quad \iota(t,v)=\frac{t}{r}(e+v), \]
where $e$ is the identity element of $V$ and $X$ is the real bounded domain in the orthogonal complement $(\R\cdot e)^\bot$ of $e$ in $V$ given by
\[ X=\left\{v\in (\R\cdot e)^\bot~|~e+v\in \Omega\right\}. \]

Using the coordinates $\iota$, we first restrict $S_\pi$ to $\PSL_2(\R)$ and show by explicit computations using the Lie algebra action that it decomposes into a direct sum of holomorphic discrete series representations $\rho_\lambda$ of parameter $\lambda>1$, realized on $L^2_\lambda(\R^+)=L^2(\R^+,t^{\lambda-1}\,dt)$, the first coordinate of $\iota$. Moreover, the multiplicity space turns out to be a space of vector valued polynomials on $X$ (see Section \ref{sec:Stratification}). This gives a natural correspondence between symmetry breaking and holographic operators and vector-valued orthogonal polynomials on $X$. Since $\PSL_2(\R)$ and $H$ commute, the latter will act on the multiplicity space, and hence on the vector-valued polynomials on $X$. This leads to our main result:

\begin{theoremIntro}[see Theorem~\ref{thm:BranchingGeneralVectorValued}]
For every holomorphic discrete series representation $S_\pi$, we have the following branching rule:
\begin{equation*}
S_\pi|_{\PSL_2(\R)\times H} \simeq {\sum_{p\in \N}}^\oplus \rho_{\alpha+2p}\otimes\big(\Pol_p(X)\otimes V_\pi|_H\big).
\end{equation*}
where $\rho_\lambda$ is the holomorphic discrete series representation of $\PSL_2(\R)$ of parameter $\lambda$, $\alpha$ is a constant depending on $\pi$, and $\Pol_p(X)$ denotes the space of homogeneous polynomials of degree $p$ on $X$ as a representation of $H$.
\end{theoremIntro}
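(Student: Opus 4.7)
The plan is to exploit the fact that in the coordinates $\iota$ of the stratified model the actions of the two commuting factors $\PSL_2(\R)$ and $H$ separate cleanly: $\PSL_2(\R)$ acts only on the radial coordinate $t\in\R^+$, while $H=\Aut(V)$, being the stabilizer of $e$, acts only on the transverse coordinate $v\in X$ (and on the fibre $V_\pi$). Once this separation is in place, the theorem will follow by combining the decomposition of $S_\pi|_{\PSL_2(\R)}$ already obtained in Section~\ref{sec:Stratification} with the identification of the $H$-action on the resulting multiplicity spaces.

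First, I would transport the model $L^2_\pi(\Omega)$ to the coordinates $(t,v)\in\R^+\times X$ via $\iota$. Using the fact that the Jacobian of $\iota$ factors as a product of a function of $t$ alone and a function of $v$ alone, and tracking the effect of the operator-valued measure on $\Omega$ through this change of variables, one should obtain a unitary identification
\[ L^2_\pi(\Omega) \simeq L^2_\alpha(\R^+)\hatotimes \mathcal{H}_\pi(X), \]
for a suitable vector-valued Hilbert space $\mathcal{H}_\pi(X)$ of functions on $X$ and a parameter $\alpha$ depending on $\pi$ and the Jordan-algebra invariants of $V$ (rank, dimension, characteristic multiplicities).

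Second, I would apply the $\PSL_2(\R)$-decomposition established in Section~\ref{sec:Stratification}. Expanding elements of $\mathcal{H}_\pi(X)$ according to the homogeneity degree in $v$ and using the explicit $\mathfrak{sl}_2$-action on the $t$-variable, one reads off
\[ S_\pi|_{\PSL_2(\R)} \simeq \sum_{p\in\N}^\oplus \rho_{\alpha+2p}\otimes\bigl(\Pol_p(X)\otimes V_\pi\bigr), \]
identifying the multiplicity space of $\rho_{\alpha+2p}$ as the space of $V_\pi$-valued homogeneous polynomials of degree $p$ on $X$.

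Third, I would analyse the $H$-action and see that it matches the claimed tensor structure. Since $H$ fixes $e$ and sits inside $L$, its action on $\Omega$ reads simply as $h\cdot\iota(t,v)=\iota(t,h.v)$, and on the target $V_\pi$ it acts by $\pi|_H$ via the identification of the complexifications of $K$ and $L$. Because $\PSL_2(\R)$ acts trivially in $v$ and on $V_\pi$, the two actions commute strictly in this model, so $H$ preserves each multiplicity space and acts there by the tensor product $\Pol_p(X)\otimes V_\pi|_H$. The main obstacle is concentrated in the first step: tracking the operator-valued measure through $\iota$, verifying that the resulting Hilbert space really does factor as a tensor product, and extracting the correct value of $\alpha$; once this bookkeeping is done, the $H$-equivariance is automatic from the fact that $H$ stabilizes $e$ and centralizes $\PSL_2(\R)$ in $G$.
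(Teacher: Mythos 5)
Your proposal is built on the claim that in the stratified coordinates the two actions separate cleanly --- in particular that ``$\PSL_2(\R)$ acts only on the radial coordinate $t$'', so that the multiplicity space of $\rho_{\alpha+2p}$ can be read off as the homogeneous polynomials $\Pol_p(X,V_\pi)$ by sorting according to homogeneity degree in $v$. That claim is false, and this is exactly where the real work lies. The translations and the diagonal torus of $\PSL_2(\R)$ do act only in $t$, and $H$ does act only in $(v,V_\pi)$; but the $\bar{\n}_1$-generator of $\mathfrak{sl}_2(\R)$ acts by $i\left(\mathcal{B}_\alpha^\R+t^{-1}D_\pi\right)$, where $D_\pi$ is a second-order differential operator in the variable $v$ which moreover involves $d\pi$ (Proposition~\ref{prop:BesselOperator(t,v)VectorValued}). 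Since $D_\pi$ does not preserve homogeneity (its Laplacian and $\Psi_\pi$ parts lower the degree), the subspace $L^2_{\alpha+2p}(\R^+)\otimes\Pol_p(X,V_\pi)$ is not $\mathfrak{sl}_2$-invariant, and the $\PSL_2(\R)$-decomposition cannot be ``read off'' by homogeneity. The actual isotypic structure is governed by the eigenspaces $W_p^\pi$ of $D_\pi$ (equivalently of the Casimir), which consist of \emph{non-homogeneous} polynomials of degree $\le p$ orthogonal to all polynomials of lower degree in $L^2_\pi(X)$.

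This gap propagates into your third step. First, your argument that the two actions commute ``because $\PSL_2(\R)$ acts trivially in $v$ and on $V_\pi$'' rests on the same false premise; the correct reason why $D_\pi$ commutes with $H$ is simply that $H$ centralizes $\psi(\PSL_2(\R))$ inside $G$. Second, once the multiplicity space is correctly identified as $W_p^\pi$, the theorem still requires an $H$-equivariant isomorphism $W_p^\pi\simeq\Pol_p(X)\otimes V_\pi|_H$, and this is a genuine additional step, not automatic: the paper produces it via the leading-term map $T(P)=P_p$, whose injectivity uses precisely the orthogonality defining $W_p^\pi$, together with a dimension count (and a recursion reconstructing $P$ from $P_p$ involving $\Delta$ and $\Psi_\pi$). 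An alternative would be to invoke complete reducibility for the compact group $H$ and identify $W_p^\pi$ with the top graded piece of the degree filtration, but some such argument must be supplied. Finally, you also need exhaustion --- that the closures of the pieces $L^2_{\alpha+2p}(\R^+)\otimes W_p^\pi$ span all of $L^2_\pi(\Omega)$ --- which the paper establishes by matching $K$-finite vectors; your proposal does not address this either.
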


This result reduces the branching problem to the irreducible decomposition of the tensor product $\Pol_p(X)\otimes V_\pi|_H$ which involves only finite dimensional representations. We decompose it explicitly into an orthogonal direct sum of irreducible representations of $H$ (with respect to a certain inner product induced from the inner product on $L^2_\pi(\Omega)$, see Section~\ref{sec:BranchingPSL2RxH} for details):
\[
\Pol_p(X)\otimes V_\pi|_H= \bigoplus_j F_p^j.
\]
Denote by $K_p^j:X\times X\to \operatorname{End}(V_\pi)$ the reproducing kernel of $F_p^j$, we can then explicitly describe the symmetry breaking and holographic operators as follows.

\begin{theoremIntro}[see Theorem~\ref{thm:SymBreakProduct}]
	\begin{enumerate}[label=(\alph*)]
		\item The operator $\phi_\pi^{p,j}:L^2_\pi(\Omega)\to L^2_{\alpha+2p}(\R^+)\otimes F_p^j$ defined by
		\[
			\phi_\pi^{p,j}f(t,v)=t^{-p}\int_X K_p^{j}(u,v)f(\iota(t,u)) \Delta(e+u)^{-\frac{n}{r}}~du,
		\]
		is a symmetry breaking operator between $S_\pi|_{\PSL_2(\R)\times H}$ and $ \rho_{\alpha+2p}\otimes F_p^j $.
		\item The operator $\Phi^\pi_{p,j}:L^2_{\alpha+2p}(\R^+)\otimes F_p^j\to L^2_\pi(\Omega)$ defined by
		\[
			\Phi^\pi_{p,j}f(t,v)=t^p f(t,v),
		\]
		is a holographic operator between $\rho_{\alpha+2p}\otimes F_p^{j}$ and $S_\pi|_{\PSL_2(\R)\times H}$.
	\end{enumerate}
\end{theoremIntro}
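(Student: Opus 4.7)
The strategy is to use the explicit stratified identification underlying Theorem~\ref{thm:BranchingGeneralVectorValued} to recognize $\Phi^\pi_{p,j}$ as the canonical inclusion of the $(p,j)$-summand into $L^2_\pi(\Omega)$ and $\phi_\pi^{p,j}$ as the corresponding adjoint projection. The first step is to pull back $L^2_\pi(\Omega)$ via $\iota$ and compute the decomposition of the defining operator-valued measure, which should factorize on $\R^+\times X$ as a product of $t^{\alpha-1}\,dt$ with a density proportional to $\Delta(e+v)^{-n/r}\,dv$ on $X$, twisted by a finite-dimensional operator on $V_\pi$. This renders natural the weight $\Delta(e+u)^{-n/r}$ appearing in $\phi_\pi^{p,j}$: it is exactly the $v$-part of the pulled-back measure, with respect to which the $F_p^j\subset\Pol_p(X)\otimes V_\pi$ are mutually orthogonal by hypothesis.

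For part (b), once the measure is written in stratified coordinates, checking that $\Phi^\pi_{p,j}$ embeds $L^2_{\alpha+2p}(\R^+)\otimes F_p^j$ isometrically into $L^2_\pi(\Omega)$ is a direct computation: multiplication by $t^p$ converts the measure $t^{\alpha+2p-1}\,dt$ of $L^2_{\alpha+2p}(\R^+)$ into the $t$-factor $t^{\alpha-1}\,dt$ of the pulled-back measure. The $H$-equivariance is immediate since $H$ fixes $e$, preserves the coordinate $t$, acts only on $v$, and $F_p^j$ is $H$-stable by construction. The $\PSL_2(\R)$-equivariance is exactly the content of the Lie algebra calculation of Section~\ref{sec:Stratification}: applying the generators of $\mathfrak{sl}_2(\R)$ to a function of the form $t^p f(t,v)$ with $f(t,\cdot)\in\Pol_p(X)\otimes V_\pi$ reproduces the infinitesimal action of $\rho_{\alpha+2p}$ on $f$ in the $t$-variable, which is precisely what pinned down the parameter $\alpha+2p$ in the branching rule.

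For part (a), I would recognize $\phi_\pi^{p,j}$, up to the prefactor $t^{-p}$, as the orthogonal projection of the slice $u\mapsto f(\iota(t,u))$ onto $F_p^j$, using that $K_p^j$ is the reproducing kernel of $F_p^j$ with respect to the weighted pairing given by $\Delta(e+u)^{-n/r}\,du$. Concretely, for each fixed $t$ one expands $u\mapsto f(\iota(t,u))$ into its graded multiplicity components provided by Theorem~\ref{thm:BranchingGeneralVectorValued}; integration against $K_p^j(u,v)$ extracts the $F_p^j$-component, and division by $t^p$ produces the correct element of $L^2_{\alpha+2p}(\R^+)\otimes F_p^j$. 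The identity $\phi_\pi^{p,j}\circ\Phi^\pi_{p,j'}=\delta_{jj'}\,\id$ then follows immediately from the reproducing property and the mutual orthogonality of the $F_p^j$, and $G'$-equivariance is inherited from (b) by duality.

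The main obstacle I anticipate is the bookkeeping needed to identify the inner product on $\Pol_p(X)\otimes V_\pi$ making $K_p^j$ a reproducing kernel with the inner product induced from $L^2_\pi(\Omega)$ via $\iota$. Specifically, the operator-valued pairing coming from $V_\pi$ has to combine correctly with the scalar density $\Delta(e+u)^{-n/r}$ to recover the Hilbert structure on the multiplicity space appearing in the branching rule. Once this compatibility is checked, both (a) and (b) reduce to the concrete identifications already carried out in Section~\ref{sec:BranchingPSL2RxH}, and the intertwining properties follow formally.
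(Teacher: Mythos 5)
Your proposal is correct and follows essentially the same route as the paper: the $\PSL_2(\R)$-intertwining is reduced to the earlier stratified Lie algebra computation (Theorem~\ref{thm:BranchingSL2VectorValued}), and the $H$-intertwining comes from the $H$-stability of $F_p^j$ together with the equivariance/reproducing property of the kernel $K_p^j$, which is exactly the content of \eqref{eq:InterwiningPropertyKernel} used in the paper. The compatibility of inner products you flag as the main obstacle is precisely the measure factorization already established in Section~\ref{sec:Stratification}, so no new idea is needed.
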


We remark that for the case $\mathfrak{g}=\mathfrak{so}(2,n)$, the subgroup $\mathfrak{g}'=\mathfrak{so}(2,1)\oplus\mathfrak{so}(n-1)$ is actually symmetric and our results agree with the more general decomposition obtained by Kobayashi in \cite{Kobayashi08}.

\paragraph{Acknowledgements.}

Both authors were supported by a research grant from the Villum Foundation (Grant No. 00025373).

\section{Preliminaries}

We recall the basic facts about Euclidean Jordan algebras and their associated groups from \cite{FarautKoranyi94} and use them to describe $L^2$-models for holomorphic discrete series representations as in \cite{DingGross93}.

\subsection{Euclidean Jordan algebras}

In this subsection, we set up the necessary notation for Jordan algebras and refer to \cite{FarautKoranyi94} for the precise definitions. 

Let $V$ be a simple Euclidean Jordan algebra of dimension $n$, rank $r$ and with unit element $e$. Let $\Omega$ be its associated symmetric cone, i.e. the interior of the set of squares in $V$. We denote by $V_\C$ the complexification of $V$ and by $T_\Omega=V+i\Omega\subset V_\C$ the corresponding tube domain. 

Write $\tr(x)$ and $\Delta(x)$ for the Jordan trace and Jordan determinant of $x\in V_\C$. Notice that $\tr(e)=r$ and $\Delta(e)=1$. We denote by $(\cdot|\cdot)$ the trace form on $V_\C$ given by $(x|y)=\tr(xy)$ for $x,y \in V_\C$. 

Further, let $L(x):V_\C\to V_\C$ be the multiplication by $x\in V_\C$. Define the quadratic representation $P(x)$, and its polarized version $P(x,y)$ for $x,y\in V_\C$ by:
\[ P(x)=2L(x)^2-L(x^2),~~~~~~P(x,y)=L(x)L(y)+L(y)L(x)-L(xy).\]
Finally, we define the box operator for $x,y\in V_\C$ by
\[x\square y=L(xy)+[L(x),L(y)].\]

Let $G=\Aut(T_\Omega)$ denote the group of biholomorphic automorphisms of $T_\Omega$. It is well-known that $G$ is a simple Lie group with trivial center. It acts transitively on $T_\Omega$ and the stabilizer $K$ of $ie$ is a maximal compact subgroup of $G$. If $\theta$ denotes the Cartan involution of $G$ which fixes $K$, we write
\[\g=\ka\oplus \p.\]
for the corresponding Cartan decomposition of the Lie algebra $\g$ of $G$.

We consider several subgroups of $G$. First, consider the subgroup of translations
\[N=\left\{\tau_u:  x\in T_\Omega \mapsto x+u\in T_\Omega~|~u\in V\right\}.\]
Further, the group 
\[L=\{g\in GL(V)~|~g\cdot \Omega\subset \Omega \},\]
acts linearly on $T_\Omega$. Then $H=K\cap L$ is a maximal compact subgroup of $L$ and it equals the automorphism group of the Jordan algebra $V$, i.e.
\[ H = \{g\in\GL(V):g(x\cdot y)=g(x)\cdot g(y)\mbox{ for all }x,y\in V\}. \]
The subgroup $P=LN$ is a maximal parabolic subgroup of $Co(V)$ with Levi factor $L$ and unipotent radical $N$. Finally, $N$ and $L$ together with the inversion
\[j:z\in T_\Omega \mapsto -z^{-1}\in T_\Omega,\]
generate the group $G$. Note that the Cartan involution $\theta$ is given by $\theta(g)=jgj^{-1}$ ($g\in G$),

Finally, we define the group
\[\bar{N}=\left\{\bar{\tau}_u=j\tau_uj^{-1} ~|~u\in V\right\}. \]
On the Lie algebra level, we have the Gelfand--Naimark decomposition
\begin{equation}
	\g = \n\oplus \mathfrak{l}\oplus \bar{\n},\label{eq:GelfandNaimark}
\end{equation}
where $\mathfrak{l}=\Lie(L)$, $\n=\Lie(N)$ and $\bar{\n}=\Lie(\bar{N})$. The Lie algebra $\mathfrak{l}$ is generated by the elements $u\square v$ for $u,v\in V$, and the Lie algebras $\n$ and $\bar{\n}$ are spanned by
$$ N_u=\left.\frac{d}{dt}\right|_{t=0}\tau_{tu}\in\n \qquad \mbox{and} \qquad \overline{N}_u=\left.\frac{d}{dt}\right|_{t=0}\bar{\tau}_u\in\bar{\n} \qquad (u\in V), $$
respectively.

Fix a Jordan frame $(c_1,\cdots, c_r)$ of $V$, and define $h_j=2L(c_j)$ so that $\sum h_j=2L(e)$. This leads to a maximal abelian subspace:
\[\mathfrak{a}=\bigoplus_{i=1}^r \R h_i\]
of $\mathfrak{l}\cap \p$ and this is also maximal abelian in $\p$. The associated root system $\Sigma(\g,\mathfrak{a})$ is of type $C_r$, and is given by
\[\left\{\frac{1}{2}(\pm\gamma_j\pm \gamma_k)~|~ 1\leq j<k\leq r \right\}\cup \left\{ \pm\gamma_j~|~1\leq j\leq r \right\} \]
with $\gamma_i(h_j)=2\delta_{ij}$, and the subsystem $\Sigma(\mathfrak{l},\mathfrak{a})$ is
$$ \left\{\frac{1}{2}(\gamma_j-\gamma_k):1\leq j\neq k\leq r\right\}. $$
We choose the positive system $\Sigma^+ (\g,\mathfrak{a})$ induced by the ordering $\gamma_r>\cdots >\gamma_1>0$ and let $\Sigma^+(\mathfrak{l},\mathfrak{a})=\Sigma^+(\g,\mathfrak{a})\cap\Sigma(\mathfrak{l},\mathfrak{a})$.

\subsection{$L^2$ model for vector valued holomorphic discrete series}

In \cite{DingGross93} an $L^2$-model for holomorphic discrete series representations of the group $G$ was constructed. We use a slight variation of this model as introduced in \cite{FrahmOlaffsonOrsted22}.

Choose a finite-dimensional irreducible representation $(\pi,V_\pi)$ of $L$. Its restricted lowest weight is given by
\[-\frac{1}{2}\sum_{i=1}^r m_i\gamma_i.\]
We also define 
\[\omega(\pi)=m_r \qquad \text{and} \qquad \alpha=\sum_{i=1}^r m_i.\]
For an element $x\in \Omega$, we set $\pi(x)= \pi(P(x))$

Notice that $L(e)=\id_V$ is contained in the center of $L$, hence by Schur's Lemma:
\[d\pi(L(e))=\nu\id_{V_\pi} =-\frac{1}{2}\alpha \id_{V_\pi}.\]
As a consequence we get for $t\in \R^+$ and $x\in \Omega$:
\begin{equation}
	\pi(tx)=\pi(P(tx))=\pi(t^2\id_V)\pi(x)=t^{-\alpha}\pi(x).\label{eq:PiRescaled}
\end{equation}

For $\omega(\pi)>\frac{2n}{r}-1$, define the operator $\Gamma_\pi$ acting on $V_\pi$ by the absolutely convergent integral
\begin{equation}
\Gamma_\pi=\int_\Omega e^{-2\tr(u)}\pi(u)^{-1}\Delta(u)^{-\frac{2n}{r}}~du.
\end{equation}
It is known that if $\pi|_H$ is irreducible then $\Gamma_\pi$ is a scalar and is equal to
\[2^{-r}\pi^{r(r-1)\frac{d}{4}}\prod_{j=1}^r \Gamma\left(m_j-\frac{n}{r}-(j-1)\frac{d}{2}\right).\]
We introduce the following Hilbert space:
\begin{equation}
L^2_\pi(\Omega):=\left\{f:\Omega\to V_\pi ~|~ \int_\Omega\langle \Gamma_\pi \pi(u^\frac{1}{2})^{-1}f(u)|\pi(u^\frac{1}{2})^{-1}f(u)\rangle \Delta(u)^{-\frac{n}{r}}~du < \infty\right\},
\end{equation}
where $u^{\frac{1}{2}}\in\Omega$ denotes the unique square root of $u\in\Omega$. On $L^2_\pi(\Omega)$ consider the following action of $G$:
\begin{align}
S_\pi(\tau_u)f(x)&=e^{-i(x|u)}f(x), &&(u\in V),\label{eq:SpiOnN}\\
S_\pi(g)f(x)&=\pi(g^*)^{-1}f(g^*x), &&(g\in L),\label{eq:SpiOnL}\\
S_\pi(j)f(x)&=\int_\Omega \mathcal{J}_\pi(u,x) f(u)\Delta(u)^{-\frac{n}{r}}~du.\label{eq:SpiOnJ}
\end{align}
Here $\mathcal{J}_\pi(u,x)$ denotes the operator-valued Bessel function associated to $\pi$ (see \cite[Definition~3.5]{DingGross93} and \cite[Section 1.6]{FrahmOlaffsonOrsted22}).

For $\omega(\pi)>\frac{2n}{r}-1$, this representation is equivalent to the holomorphic discrete series representation with highest weight space isomorphic to $V_\pi$. If $\dim(V_\pi)=1$, then we call $S_\pi$ a scalar-valued holomorphic discrete series representation. One recovers the scalar-valued case by choosing $\pi(g)=|\det(g)|^{-\frac{r\lambda}{2n}}$, then $m_1=\cdots=m_r=\lambda$, so that $\alpha=r\lambda$.

On the smooth vectors, the derived representation is given by:
\begin{align*}
dS_\pi(N_u)&=-i(x|u),\\
dS_\pi(S)&=\partial_{S^*x}-d\pi(S^*),\\
dS_\pi(\overline{N}_v)&=i(v|\mathcal{B}_\pi).
\end{align*}
Here $S^*$ denotes the adjoint of $S\in\mathfrak{l}$ with respect to the trace form and $\mathcal{B}_\pi$ denotes the vector-valued Bessel operator given by
\begin{equation}\label{eq:DefinitionBesselOperator}
(v|\mathcal{B}_\pi)=\sum_{i,j}(v|P(e_i,e_j)x)\derptwobis{}{e_i}{e_j}-2\sum_i d\pi(v\square e_i)\derpone{}{e_i}.
\end{equation}
The space of $K$-finite vectors in $L^2_\pi(\Omega)$ is the space of functions of the form
$$ x\mapsto p(x)e^{-\tr(x)} \qquad (x\in\Omega), $$
where $p\in\Pol(V,V_\pi)$, a polynomial with values in $V_\pi$.\\

We take a closer look at the case case $V=\R$ where $G\simeq\PSL_2(\R)$. Here all holomorphic discrete series representations are scalar-valued. The above discussion gives a realization $\rho_\lambda$ ($\lambda>1$) on
$$ L^2_\lambda(\R^+) = L^2(\R^+,t^{\lambda-1}\,dt) $$
by the following formulas:
\begin{align}
d\rho_\lambda\begin{pmatrix}0&1\\0&0\end{pmatrix} &= -it,\label{eq:HolDsActionPSL2a}\\
d\rho_\lambda\begin{pmatrix}1&0\\0&-1\end{pmatrix} &= 2t\frac{d}{d t}+\lambda,\label{eq:HolDsActionPSL2b}\\
d\rho_\lambda\begin{pmatrix}0&0\\-1&0\end{pmatrix} &= i\mathcal{B}_\lambda^\R = i\left(t\frac{d^2}{dt^2}+\lambda\frac{d}{dt}\right),\label{eq:HolDsActionPSL2c}
\end{align}

\section{Restriction of a holomorphic discrete series}

We consider the restriction of a representation $S_\pi$ of the holomorphic discrete series of $G$ to a non-compact subgroup $G'$ of the form $\PSL_2(\R)\times H$. Here, $H$ is the automorphism group of the cone $\Omega$, and its centralizer in $G$ turns out to be the image of the map
$$ \psi:\PSL_2(\R)\to G, \quad \psi\begin{pmatrix}a&b\\c&d\end{pmatrix}z = \frac{az+b}{cz+d} \qquad (z\in T_\Omega). $$
To do so, we use a specific change of variables introduced in \cite{Labriet22} and transfer the $L^2$-model for the representation into the stratified model.
 
\subsection{Stratification}\label{sec:Stratification}

In \cite{Labriet22} a stratification map was introduced to study the restriction of holomorphic discrete series realized on $L^2$-functions on $\Omega$ to the conformal group of a Jordan subalgebra $V_1$ of $V$. The stratification map $\iota$ is a diffeomorphism from $\Omega_1\times X$ to $\Omega$, where $\Omega_1$ is the symmetric cone in $V_1$ and $X$ is a certain real bounded domain. In this paper, we focus on the special case where $V_1=\R\cdot e$.

In this situation, the stratification space is the domain
\[X=\left\{v\in (\R\cdot e)^\bot~|~e+v\in \Omega\right\} \subseteq (\R\cdot e)^\perp,\]
and the stratification map $\iota:\R^+\times X \to \Omega$ becomes
\[\iota(t,v)=\frac{t}{r}(e+v).\]
Choosing an orthonormal basis $\{e_0,\ldots,e_{n-1}\}$ of $V$ such that $e_0=r^{-\frac{1}{2}}e$ and $(e_1,\cdots,e_{n-1})$ is an orthonormal basis for $(\R\cdot e )^\bot$, the inverse of $\iota$ can be expressed as
\[\iota^{-1}(x)=(rx_0,x_0^{-1}x'),\]
where $x=x_0 e+\sum x_i e_i$ and $x'=\sum x_i e_i$. If $dv$ denotes Lebesgue measure on $X$ normalized by the trace form $(\cdot|\cdot)$, then the following integral formula holds:
\begin{equation}
	\int_\Omega f(x)\,dx = r^{\frac{1}{2}-n}\int_X\int_{\R^+} f(\iota(t,v))t^{n-1}\,dt\,dv.\label{eq:StratificationIntFormula}
\end{equation}

We first study how the operator-valued gamma function $\Gamma_\pi$ that is used to define the invariant inner product of $L^2_\pi(\Omega)$ behaves with respect to these coordinates.

\begin{lemme}
The operator $\Gamma_\pi$ factors as
\begin{equation}
\Gamma_\pi=\Gamma_\alpha\Gamma_{\pi,X},
\end{equation}
where $\Gamma_\alpha=\frac{\Gamma(\alpha-n)}{r^{\alpha-n-\frac{1}{2}}2^{\alpha-n}}$ and $\Gamma_{\pi,X}$ denotes the operator defined by:
\begin{equation}
\Gamma_{\pi,X}=\int_X\pi(e+v)^{-1}\Delta(e+v)^{-\frac{2n}{r}}~dv.
\end{equation}
Furthermore, if $\pi|_H$ is irreducible then $\Gamma_{\pi,X}$ is a scalar.
\end{lemme}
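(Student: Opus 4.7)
The plan is to substitute the stratification change of variables $u=\iota(t,v)=\tfrac{t}{r}(e+v)$ into the defining integral for $\Gamma_\pi$ and show that the integrand factors as a function of $t$ times a function of $v$, producing $\Gamma_\alpha$ and $\Gamma_{\pi,X}$ respectively. Concretely, applying the integral formula (2.26) gives
\[
\Gamma_\pi=r^{\frac{1}{2}-n}\int_X\int_{\R^+}e^{-2\tr(\iota(t,v))}\,\pi(\iota(t,v))^{-1}\,\Delta(\iota(t,v))^{-\frac{2n}{r}}\,t^{n-1}\,dt\,dv.
\]
The substance of the computation is then to evaluate each factor.

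For the three factors in the integrand, I would use the following observations. Since $v\in(\R\cdot e)^\perp$ we have $\tr(v)=(v|e)=0$, so $\tr(\iota(t,v))=\tfrac{t}{r}\tr(e)=t$. Since $\Delta$ is homogeneous of degree $r$, $\Delta(\iota(t,v))=(t/r)^r\Delta(e+v)$. Finally, the rescaling identity (2.7) gives $\pi(\iota(t,v))^{-1}=(t/r)^\alpha\pi(e+v)^{-1}$. Collecting powers of $t$ and $r$, the combined scalar factor becomes $t^{\alpha-n-1}r^{2n-\alpha}$, which when multiplied by the $r^{\frac{1}{2}-n}$ from the Jacobian produces the clean power $r^{-(\alpha-n-\frac{1}{2})}$. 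Fubini then separates the integral into
\[
\Gamma_\pi=r^{-(\alpha-n-\frac{1}{2})}\left(\int_{\R^+}e^{-2t}t^{\alpha-n-1}\,dt\right)\left(\int_X \pi(e+v)^{-1}\Delta(e+v)^{-\frac{2n}{r}}\,dv\right),
\]
and the $t$-integral is evaluated by the substitution $s=2t$ to give $2^{-(\alpha-n)}\Gamma(\alpha-n)$, yielding exactly the constant $\Gamma_\alpha$ stated.

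For the final assertion that $\Gamma_{\pi,X}$ is a scalar when $\pi|_H$ is irreducible, I would invoke Schur's lemma. Since $H=\Aut(V)$ fixes $e$, preserves $\Omega$, and acts orthogonally with respect to the trace form (so it stabilizes the affine slice $e+(\R e)^\perp$, hence $X$), the change of variables $v\mapsto h\cdot v$ preserves Lebesgue measure on $X$, and the Jordan determinant identity $\Delta(h(e+v))=\Delta(e+v)$ together with $\pi(h(e+v))=\pi(h)\pi(e+v)\pi(h)^{-1}$ show that $\Gamma_{\pi,X}$ intertwines $\pi|_H$ with itself. Irreducibility of $\pi|_H$ then forces $\Gamma_{\pi,X}$ to be a scalar.

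I expect no real obstacle beyond bookkeeping; the only point requiring care is tracking the powers of $t$ and $r$ after substitution (and confirming the convergence of both factors, which holds under the standing hypothesis $\omega(\pi)>\tfrac{2n}{r}-1$ that already guaranteed absolute convergence of $\Gamma_\pi$).
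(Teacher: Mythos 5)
Your proof is correct and follows essentially the same route as the paper: substitute the stratification map into the defining integral, use $\tr(\iota(t,v))=t$, the homogeneity $\Delta(\iota(t,v))=(t/r)^r\Delta(e+v)$ and the rescaling identity $\pi(tx)=t^{-\alpha}\pi(x)$, and separate the $t$- and $v$-integrals; your bookkeeping of the powers of $t$ and $r$ and the evaluation $\int_0^\infty e^{-2t}t^{\alpha-n-1}\,dt=2^{-(\alpha-n)}\Gamma(\alpha-n)$ reproduce the paper's computation exactly. The only (harmless) deviation is in the final claim, where you prove that $\Gamma_{\pi,X}$ is scalar directly via Schur's lemma (valid, since $H$ fixes $e$, preserves $X$, $dv$ and $\Delta$, and $\pi(h(e+v))=\pi(h)\pi(e+v)\pi(h)^{-1}$), whereas the paper deduces it at once from the previously quoted fact that $\Gamma_\pi$ itself is scalar when $\pi|_H$ is irreducible.
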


\begin{proof}
Using \eqref{eq:PiRescaled} and \eqref{eq:StratificationIntFormula} one gets:
\begin{align*}
\Gamma_\pi&=\int_\Omega e^{-2\tr(u)}\pi(u)^{-1}\Delta(u)^{-\frac{2n}{r}}~du\\
&=\frac{1}{r^{\alpha-n-\frac{1}{2}}}\left(\int_{\R^+} e^{-2t}t^{\alpha-n-1}~dt\right)
\left(\int_X\pi(e+v)^{-1}\Delta(e+v)^{-\frac{2n}{r}}~dv\right).
\end{align*}
The last statement is direct since $\Gamma_\pi$ already is a scalar in this case. 
\end{proof}
 
This Lemma allows us to identify $L^2_\pi(\Omega)$ with a space of functions on $\R_+\times X$.

\begin{prop}
The pullback $\iota^*$ is a scalar multiple of a unitary map between $L^2_\pi(\Omega)$ and $L^2_\alpha(\R^+)\hatotimes L^2_\pi(X)$ where:
\begin{multline*}
	L^2_\pi(X) = \Big\{f:X\to V_\pi\Big|\int_X\langle\Gamma_{\pi,X}\pi((e+v)^\frac{1}{2})^{-1}f(v),\pi((e+v)^\frac{1}{2})^{-1}f(v)\rangle \\
	\times\Delta(e+v)^{-\frac{n}{r}}~dv<\infty\Big\}.
\end{multline*}
More precisely:
\[\|f\|^2_{L^2_\pi(\Omega)}=\frac{\Gamma_\alpha}{r^{\alpha-\frac{1}{2}}}\|f\circ\iota\|_{L^2(\R^+,t^{\alpha-1}~dt)\hatotimes L^2_\pi(X)}^2\]
\end{prop}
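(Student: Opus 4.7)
The plan is to verify the asserted norm identity by a direct change of variables along $\iota$; unitarity (up to the scalar $\sqrt{\Gamma_\alpha/r^{\alpha-1/2}}$) then follows by polarization, and the fact that $\iota^*$ is an isomorphism of measurable function spaces is automatic since $\iota:\R^+\times X\to \Omega$ is a diffeomorphism. So the whole content of the proposition is the asserted equality of norms, which I want to obtain by inserting $u=\iota(t,v)=(t/r)(e+v)$ into the integral defining $\|f\|^2_{L^2_\pi(\Omega)}$ and reading off the three factors that the integrand decomposes into.

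The first step is the scaling computation. Since $u^{1/2}=(t/r)^{1/2}(e+v)^{1/2}$, the homogeneity relation \eqref{eq:PiRescaled} gives
\[
\pi(u^{1/2})^{-1} = (t/r)^{\alpha/2}\,\pi((e+v)^{1/2})^{-1},
\]
and homogeneity of $\Delta$ of degree $r$ gives $\Delta(u)^{-n/r}=(t/r)^{-n}\Delta(e+v)^{-n/r}$. Combined with the Jacobian $r^{1/2-n}t^{n-1}\,dt\,dv$ coming from \eqref{eq:StratificationIntFormula}, the accumulated powers of $t$ collapse to $t^{\alpha-1}$ and the powers of $r$ to $r^{1/2-\alpha}$.

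The second step is to use the previous lemma to split off the Euler-type factor. Substituting $\Gamma_\pi=\Gamma_\alpha\Gamma_{\pi,X}$ (where $\Gamma_\alpha\in\R$ is scalar), pulling $\Gamma_\alpha$ out of the integral, and applying Fubini, the $t$- and $v$-integrations decouple: the $t$-integral is exactly $\int_{\R^+}|\cdot|^2 t^{\alpha-1}\,dt$, i.e.\ the norm on $L^2_\alpha(\R^+)$, while the $v$-integral reproduces $\|\cdot\|^2_{L^2_\pi(X)}$ because the operator $\Gamma_{\pi,X}$ paired with $\pi((e+v)^{1/2})^{-1}$ and the weight $\Delta(e+v)^{-n/r}$ is precisely what defines that inner product. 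This gives the stated identity.

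The argument is bookkeeping rather than conceptual, and there is no real obstacle. The only point requiring care is keeping track of the powers of $t$ and $r$ arising from $\pi(u^{1/2})^{-1}$, $\Delta(u)^{-n/r}$ and the Jacobian simultaneously, so that the residual factor $\pi((e+v)^{1/2})^{-1}$ is left inside the inner product in exactly the form needed to match the definition of $L^2_\pi(X)$; once this is done, the scalar $\Gamma_\alpha/r^{\alpha-1/2}$ is just what remains.
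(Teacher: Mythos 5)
Your proposal is correct and follows exactly the route the paper intends: the paper's proof is just the remark that this is ``a direct computation similar to the one for the previous lemma,'' i.e.\ the change of variables via $\iota$ using \eqref{eq:PiRescaled}, homogeneity of $\Delta$, the integral formula \eqref{eq:StratificationIntFormula}, and the factorization $\Gamma_\pi=\Gamma_\alpha\Gamma_{\pi,X}$, which is precisely your bookkeeping (and your powers of $t$ and $r$ check out, giving $t^{\alpha-1}$ and $r^{1/2-\alpha}$).
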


\begin{proof}
This is a direct computation similar to the one for the previous lemma.
\end{proof}

We now study how the isomorphism between $L^2_\pi(\Omega)$ and $L^2(\R^+,t^{\alpha-1}\,dt)\hatotimes L^2_\pi(X)$ can be used to decompose the restriction of $S_\pi$ to $\PSL_2(\R)\times H$. Note that $H$ is contained in $L$, so its action in $S_\pi$ is given by \eqref{eq:SpiOnL}:
\begin{itemize}
	\item For $k\in H$, we get:
	\begin{equation}
		S_\pi(k)f(t,v)=\pi(k)f(t,k^{-1}v).\label{eq:StratfiedActionH}
	\end{equation}
\end{itemize}

On the factor $\PSL_2(\R)$, the group action involves the complicated operator-valued Bessel function, so we use the action of the Lie algebra of $\PSL_2(\R)$ instead. Note that the Lie algebra of $\PSL_2(\R)$ decomposes according to the Gelfand--Naimark decomposition \eqref{eq:GelfandNaimark} as
\[ \Lie(\PSL_2(\R)) = \n_1\oplus \R L(e)\oplus \bar{\n}_1,\]
with $\n_1\subset \n$ and $\bar{\n}_1\subset \bar{\n}$ corresponding to the embedding $\R e\subseteq V$. This gives the following actions in the stratified model:
\begin{itemize}
\item For the translations of $\PSL_2(\R)$:
\begin{equation}
	S_\pi(\psi\begin{pmatrix}1&b\\0&1\end{pmatrix})f(t,v)=e^{-itb}f(t,v).\label{eq:StratifiedActionPSL2a}
\end{equation}
\item The matrix $g=\diag(a,a^{-1})\in\PSL_2(\R)$ acts via scalar multiplication by $a^2$ on $\Omega$, hence we have $\pi(\diag(a,a^{-1}))=\pi(a^2\cdot\id)=a^{-\alpha}$, so:
\begin{equation}
	S_\pi(\psi\begin{pmatrix} a&0\\0&a^{-1}\end{pmatrix})f(t,v)=a^{\alpha}f(a^2t,v).\label{eq:StratifiedActionPSL2b}
\end{equation}
\end{itemize}

Finally, the $\bar{\n}_1$ action is given by the following:

\begin{prop}\label{prop:BesselOperator(t,v)VectorValued}
The action of $\bar{\n}_1\subseteq\mathfrak{sl}_2(\R)$ in the stratified model is given by
\[ dS_\pi(d\psi\begin{pmatrix}0&0\\1&0\end{pmatrix})=i\left(\mathcal{B}_\alpha^\R +t^{-1}D_\pi\right), \]
where $\mathcal{B}_\alpha^\R$ denotes the Bessel operator for the one dimensional Jordan algebra $\R$ (see \eqref{eq:HolDsActionPSL2c}) and $D_\pi$ is the following second order differential operator in the variable $v\in X$:
\[
D_\pi=r\sum_{i=1}^{n-1}\derptwo{ }{v_i}+\sum_{1\leq i,j \leq n-1}\!\!\!\left(r(e_ie_j|v)-v_iv_j\right)\derptwobis{}{v_i}{v_j}-\alpha\sum_{i=1}^{n-1}v_i\derpone{}{v_i}-2r\sum_{i\geq 1} d\pi(L(e_i))\derpone{}{v_i}.
\]
\end{prop}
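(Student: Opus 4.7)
The plan is to transfer the $\bar{\n}_1$-action on $L^2_\pi(\Omega)$, given in the preliminaries by $dS_\pi(\bar{N}_v) = i(v|\mathcal{B}_\pi)$, to the coordinates $(t,v)\in\R^+\times X$ via pullback along $\iota$. A short check comparing $\psi\begin{pmatrix}1&0\\c&1\end{pmatrix}(z) = z(cz+e)^{-1}$ with $\bar{\tau}_u(z) = -(u-z^{-1})^{-1}$ identifies $d\psi\begin{pmatrix}0&0\\1&0\end{pmatrix}$ as a scalar multiple of $\bar{N}_e$, so the task reduces to computing the differential operator $(e|\mathcal{B}_\pi)$ in the $(t,v)$-coordinates.

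Before the change of variables, one simplifies $(e|\mathcal{B}_\pi)$ algebraically. Self-adjointness of $P(x,y)$ together with the Jordan identity $P(x,y)e = xy$ gives $(e|P(e_i,e_j)x) = (e_ie_j|x)$, while $e\square e_i = L(e_ie) + [L(e),L(e_i)] = L(e_i)$. Choose the orthonormal basis adapted to the splitting $V = \R e\oplus(\R e)^\perp$, namely $e_0 = r^{-1/2}e$ together with $e_1,\dots,e_{n-1}$ orthonormal in $(\R e)^\perp$; the needed coefficients then become explicit: $(e_0^2|x) = t/r$, $(e_0 e_j|x) = r^{-3/2}tv_j$ for $j\ge 1$, and $(e_ie_j|x) = (t/r)(\delta_{ij}+(e_ie_j|v))$ for $i,j\ge 1$. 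Schur's lemma applied to the central element $L(e) = \id_V$ yields $d\pi(L(e_0)) = -\alpha/(2r^{1/2})$.

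The chain rule for $\iota$ gives $\partial_{y_0} = r^{1/2}(\partial_t - t^{-1}E_v)$, with $E_v = \sum_{i\ge 1}v_i\partial_{v_i}$ the Euler field in $v$, and $\partial_{y_i} = (r/t)\partial_{v_i}$ for $i\ge 1$. Substituting into the simplified form of $(e|\mathcal{B}_\pi)$ and carefully collecting terms, using the commutation relations $[\partial_t, E_v] = 0$, $[E_v,\partial_{v_j}] = -\partial_{v_j}$, together with the identity $\sum_{i,j\ge 1}v_iv_j\partial_{v_i}\partial_{v_j} = E_v^2 - E_v$, every cross contribution of the form $E_v\partial_t$ or $E_v^2$ from the radial and mixed sums cancels. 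What remains is $t\partial_t^2 + \alpha\partial_t = \mathcal{B}_\alpha^\R$ acting in $t$, together with $t^{-1}$ times precisely the four summands of $D_\pi$ acting in $v$.

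The main obstacle is purely bookkeeping: the chain rule produces many cross terms of the shapes $E_v \partial_t$, $E_v^2$, $t^{-1}E_v$, and $t^{-1}E_v^2$, and one must verify that they cancel completely. This cancellation is a reflection of the homogeneity of $\Omega$ and the central action of $L(e)$. In particular, the Schur identity $d\pi(L(e_0)) = -\alpha/(2r^{1/2})$ is exactly what converts the $-2d\pi(L(e_0))\partial_{y_0}$ term into the first-order $\alpha\partial_t$ of $\mathcal{B}_\alpha^\R$ plus the $-\alpha E_v$ piece of $D_\pi$, while the transverse Bessel contributions $(e_ie_j|x)$ split naturally into the $r\sum\partial_{v_i}^2$ and $r\sum(e_ie_j|v)\partial_{v_i}\partial_{v_j}$ pieces, with the $-v_iv_j$ correction supplied by the residual $t^{-1}(E_v - E_v^2)$ from the radial/mixed simplification.
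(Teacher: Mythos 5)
Your proposal is correct and takes essentially the same route as the paper's proof: starting from $dS_\pi(\overline{N}_e)=i(e|\mathcal{B}_\pi)$, expanding in the orthonormal basis adapted to $\R e\oplus(\R e)^\perp$, applying the chain rule for $\iota$, and using $d\pi(L(e))=-\tfrac{\alpha}{2}\id$; your intermediate coefficients and the residual $t^{-1}(E_v-E_v^2)=-t^{-1}\sum_{i,j}v_iv_j\partial_{v_i}\partial_{v_j}$ supplying the $-v_iv_j$ term agree with the paper's computation (your early simplification $(e|P(e_i,e_j)x)=(e_ie_j|x)$ is a mild streamlining of the same step). The only detail still to pin down is the precise normalization, not just ``a scalar multiple,'' in identifying $d\psi\begin{pmatrix}0&0\\1&0\end{pmatrix}$ with $\overline{N}_e$, since that constant fixes the overall sign in the stated formula.
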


\begin{proof}
For a smooth function $f:\Omega\to V_\pi$ we have the formula
\begin{equation}
dS_\pi\begin{pmatrix}0&0\\1&0\end{pmatrix} f(x)=i\left(\sum_{i,j}(e|P(e_i,e_j)x)\derptwobis{f}{e_i}{e_j}-2\sum_i d\pi(e\square e_i)\derpone{f}{e_i}\right).
\end{equation}
In what follows we consider $x=x_0 e+\sum x_i e_i$ hence:
\[\derpone{f}{e_0}=\frac{1}{r^\frac{1}{2}}\derpone{f}{x_0} \quad \text{and} \quad \derpone{f}{e_i}=\derpone{f}{x_i}.\]
The second order part of the Bessel operator, denoted $\mathcal{B}_0$, gives:
\begin{multline}\label{eq:Second_order_Bessel}
(e|\mathcal{B}_0f(x))\\=(e|P(e_0,e_0)x)\derptwo{f}{e_0}+2\sum_{i=1}^{n-1}(e|P(e_i,e_0)x)\derptwobis{f}{e_0}{e_i}+\sum_{i,j\geq 1}(e|P(e_i,e_j)x)\derptwobis{f}{e_i}{e_j}.
\end{multline}
Hence, using the previous remark and the fact that $(e|e)=r$ we get:
\[
(e|\mathcal{B}_0f(x))\\= \frac{x_0}{r}\derptwo{f}{x_0}+\frac{2}{r}\sum_{i=1}^{n-1}x_i\derptwobis{f}{x_0}{x_i}+\sum_{i,j\geq 1}(e|P(e_i,e_j)x)\derptwobis{f}{x_i}{x_j}.
\]

Next we compute the derivatives for the variables $(t,v)$ with respect to the variables $x_i$ and we get:
\begin{align*}
	\derpone{t}{x_0} &= r, & \derpone{t}{x_i} &= 0 && (i\geq1),\\
	\derpone{v_i}{x_0} &= -rt^{-1}v_i, & \derpone{v_i}{x_j} &= \delta_{ij}rt^{-1} && (i,j\geq 1).
\end{align*}
Using the chain rule this gives:
\begin{align*}
	\derpone{f}{x_0} &= r\derpone{f}{t}-rt^{-1}\sum_{i=1}^{n-1} v_i\derpone{f}{v_i},\\
	\derpone{f}{x_i} &= rt^{-1}\derpone{f}{v_i},\\
	\derptwo{f}{x_0} &= r^2\left(\derptwo{f}{t}+2t^{-2}\sum_{i=1}^{n-1}v_i\derpone{f}{v_i}-2t^{-1}\sum_{i=1}^{n-1}v_i\derptwobis{f}{v_i}{t}+t^{-2}\sum_{1\leq i,j\leq n-1}v_iv_j\derptwobis{f}{v_i}{v_j}\right),\\
	\derptwobis{f}{x_0}{x_j} &= r^2\left(t^{-1}\derptwobis{f}{v_j}{t}-t^{-2}\derpone{f}{v_j}-t^{-2}\sum_{i=1}^{n-1}v_i\derptwobis{f}{v_i}{v_j}\right),\\
	\derptwobis{f}{x_i}{x_j} &= r^2t^{-2}\derptwobis{f}{v_i}{v_j}.
\end{align*}
Plugging these derivatives into \eqref{eq:Second_order_Bessel} one gets 
\[
(e|\mathcal{B}_0f(x))=t\derptwo{f}{t}+rt^{-1}\sum_{i,j}(P(e_i,e_j)(e+v)|e)\derptwobis{f}{v_i}{v_j}-t^{-1}\sum_{1\leq i,j\leq n-1}v_iv_j\derptwobis{f}{v_i}{v_j}.
\]
Finally, we focus our attention on the first order part of the Bessel operator and we get in the coordinates $(t,v)$:
\begin{align*}
\sum_{i\geq 0} d\pi(e\square e_i)\derpone{f}{e_i}&=d\pi(L(e_0))\derpone{f}{e_0}+\sum_{i\geq 1} d\pi(e\square e_i)\derpone{f}{e_i}\\
&=-\frac{\alpha}{2r}\derpone{f}{e_0}+\sum_{i\geq 1} d\pi(e\square e_i)\derpone{f}{e_i}\\
&=-\frac{\alpha}{2}\left(\derpone{f}{t}-t^{-1}\sum_{i=1}^{n-1}v_i\derpone{f}{v_i}\right)+rt^{-1}\sum_{i\geq 1} d\pi(L(e_i))\derpone{f}{v_i}.
\end{align*}
Adding the contribution of the second order part finishes the proof.
\end{proof}

\subsection{Branching to $\PSL_2(\R)$}

Now we consider the branching to the subgroup $\PSL_2(\R)$, and we first look at the Casimir operator
$$ C = H^2+2(XY+YX) = H^2+2H+4YX, $$
where $H,X,Y$ denotes the standard basis of $\mathfrak{sl}_2(\R)$ satisfying
$$ [H,X] = 2X, \quad [H,Y] = -2Y, \quad [X,Y]=H. $$

\begin{prop}
The Casimir operator of $\mathfrak{sl}_2(\R)$ acts in the stratified model by
\begin{equation}
dS_\pi(d\psi(C))=\alpha(\alpha-1)-4D_\pi
\end{equation}
In particular, this implies that $D_\pi$ has a self-adjoint extension to $L^2_\pi(X)$.
\end{prop}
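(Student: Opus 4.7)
The plan is a direct substitution. Combining the explicit formulas from \eqref{eq:StratifiedActionPSL2a}, \eqref{eq:StratifiedActionPSL2b}, and Proposition~\ref{prop:BesselOperator(t,v)VectorValued} gives
\begin{align*}
dS_\pi(d\psi(H)) &= 2t\partial_t+\alpha, \\
dS_\pi(d\psi(X)) &= -it, \\
dS_\pi(d\psi(Y)) &= i\bigl(\mathcal{B}_\alpha^\R+t^{-1}D_\pi\bigr).
\end{align*}
I would insert these into $C=H^2+2H+4YX$ and simplify. The key observation is that $D_\pi$ involves only $v$-derivatives and therefore commutes both with multiplication by $t$ and with $\partial_t$, so the two variables can be disentangled throughout the computation.

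For the algebraic step, $H^2$ and $2H$ are expanded using the operator identity $\partial_t\circ t = t\partial_t+1$ and contribute only to the $t$-part. For $4YX$, the factors $i\cdot(-i)=1$ kill the imaginary units, and the commutation $t^{-1}D_\pi\cdot t = D_\pi$ reduces that composition to $4(\mathcal{B}_\alpha^\R\cdot t + D_\pi)$, where $\mathcal{B}_\alpha^\R\cdot t$ is again a pure $t$-operator that expands via the same commutation. Summing the three contributions, the $t^2\partial_t^2$ and $t\partial_t$ pieces should cancel exactly, leaving only the scalar $\alpha(\alpha-1)$ (the standard Casimir eigenvalue on the lowest $H$-weight vector of $\rho_\alpha$) plus $-4D_\pi$. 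The cancellation is not a coincidence: the Casimir is central, so its image modulo the $v$-contribution must act by a scalar on the irreducible $\PSL_2(\R)$-part, and that scalar is forced by the $H$-weight of the lowest weight vector.

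For the self-adjoint extension, the argument is functional-analytic. The Casimir $dS_\pi(d\psi(C))$ is the image of a central element of $U(\mathfrak{sl}_2(\R)_\C)$ under a unitary representation, hence it is essentially self-adjoint on the dense subspace of smooth vectors of $S_\pi$ by the general theory of unitary representations (Segal--Nelson). Transporting this through the unitary identification $L^2_\pi(\Omega)\simeq L^2_\alpha(\R^+)\hatotimes L^2_\pi(X)$ from the preceding proposition and using
\[
dS_\pi(d\psi(C)) \;=\; \alpha(\alpha-1)\cdot\id \;-\; 4\,\id\hatotimes D_\pi,
\]
the operator $D_\pi$ acting on the second tensor factor inherits a (closed, densely defined) self-adjoint extension on $L^2_\pi(X)$.

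The only real obstacle is bookkeeping in the algebraic step: one has to track the noncommutation of $t$ and $\partial_t$ carefully in $\mathcal{B}_\alpha^\R\cdot t$ and in $H^2$ so that the $t$-dependent contributions of $H^2$, $2H$, and $4YX$ collapse to the pure scalar $\alpha(\alpha-1)$ and only the transverse term $-4D_\pi$ survives.
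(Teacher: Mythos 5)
Your overall route is the paper's: substitute the stratified-model formulas into $C=H^2+2H+4YX$ and compute (the paper's proof is exactly this one-line reduction to \eqref{eq:StratifiedActionPSL2a}, \eqref{eq:StratifiedActionPSL2b} and Proposition~\ref{prop:BesselOperator(t,v)VectorValued}). The problem is that the step you dismiss as ``bookkeeping'' is where the proof actually lives, and as you have set it up it fails. With the assignments you commit to, $d\psi(X)\mapsto -it$ and $d\psi(Y)\mapsto +i\bigl(\mathcal{B}_\alpha^\R+t^{-1}D_\pi\bigr)$, you get $4YX\mapsto +4\bigl(\mathcal{B}_\alpha^\R\circ t+D_\pi\bigr)$, and then nothing cancels: $H^2$ contributes $4t^2\partial_t^2+(4+4\alpha)t\partial_t+\alpha^2$ while $\mathcal{B}_\alpha^\R\circ t=t^2\partial_t^2+(\alpha+2)t\partial_t+\alpha$, so the second-order terms add to $8t^2\partial_t^2$ and $D_\pi$ enters with sign $+4$, contradicting the claimed $-4D_\pi$. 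Indeed these two assignments do not even satisfy the bracket relations: $\bigl[-it,\,i(\mathcal{B}_\alpha^\R+t^{-1}D_\pi)\bigr]=[t,\mathcal{B}_\alpha^\R]=-(2t\partial_t+\alpha)$, i.e.\ $-H$ rather than $H$. The cancellation requires the lowering direction to act by $-i\bigl(\mathcal{B}_\alpha^\R+t^{-1}D_\pi\bigr)$, in line with \eqref{eq:HolDsActionPSL2c}, where it is the matrix $\begin{pmatrix}0&0\\-1&0\end{pmatrix}$ that is sent to $i\mathcal{B}_\lambda^\R$; you must reconcile this sign convention with the statement of Proposition~\ref{prop:BesselOperator(t,v)VectorValued} before summing, rather than asserting that the pieces ``should cancel exactly.''

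Second, your justification of the constant is not a proof. Once the relative sign is fixed so that the $t$-part does cancel, the constant that comes out is $\alpha^2+2\alpha-4\alpha=\alpha(\alpha-2)$, which is the Casimir eigenvalue in the normalization $C=H^2+2H+4YX$ (from \eqref{eq:HolDsActionPSL2a}--\eqref{eq:HolDsActionPSL2c} one finds $d\rho_\lambda(C)=\lambda(\lambda-2)$, and the consistency check $\alpha(\alpha-2)+4p(p+\alpha-1)=(\alpha+2p)(\alpha+2p-2)$ matches the eigenvalue of $D_\pi$ on $W_p^\pi$ from Proposition~\ref{prop:DefinitionWp}). So the parenthetical appeal to ``the standard Casimir eigenvalue $\alpha(\alpha-1)$ on the lowest weight vector of $\rho_\alpha$'' is incorrect for this normalization and, more importantly, it substitutes for exactly the computation you were supposed to carry out; doing it honestly forces you to confront the constant rather than read it off the statement. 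The functional-analytic half is fine in spirit (essential self-adjointness of $dS_\pi(d\psi(C))$ on smooth vectors, then transport through the unitary identification $L^2_\pi(\Omega)\simeq L^2_\alpha(\R^+)\hatotimes L^2_\pi(X)$), and is what the paper leaves implicit, though you should add a line on why self-adjointness of the operator $\id\hatotimes D_\pi$ on the tensor product yields a self-adjoint extension of $D_\pi$ on $L^2_\pi(X)$ itself.
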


\begin{proof}
This is a direct consequence of the formulas for the derived representation and Proposition~\ref{prop:BesselOperator(t,v)VectorValued}.
\end{proof}

The operator $D_\pi$ can be written as
\begin{equation}
	D_\pi=r\Delta+\Psi_\pi-E(E+\alpha-1),
\end{equation}
where $\Delta=\sum_{i=1}^{n-1}\derptwo{}{v_i}$ denotes the Laplacian and $E=\sum_i v_i\derpone{}{v_i}$ the Euler operator on $(\R\cdot e)^\bot$, and $\Psi_\pi$ is the differential operator defined by:
\begin{equation}\label{eq:DefinitionPsi}
\Psi_\pi =\sum_{1\leq i,j\leq n-1}r(e_ie_j|v)\derptwobis{}{v_i}{v_j}-2r\sum_{i\geq 1}d\pi(L(e_i))\derpone{}{v_i}.
\end{equation}
 
\begin{prop}\label{prop:DefinitionWp}
The operator $D_\pi$ acts on the space $\Pol(X,V_\pi)$ of restrictions of polynomials on $(\R\cdot e)^\perp$ with values in $V_\pi$ to the open subset $X$, and the subspace:
\[W^\pi_p=\{P\in \Pol(X,V_\pi)|\deg(P)\leq p,(P|Q)_{L^2_\pi(X)}=0\text{ if }\deg(Q)<p\},\]
is an eigenspace for $D_\pi$ with eigenvalue $-p(p+\alpha-1)$.
\end{prop}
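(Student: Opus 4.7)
The plan is to analyze the action of the differential operator $D_\pi = r\Delta + \Psi_\pi - E(E+\alpha-1)$ on the filtration of $\Pol(X,V_\pi)$ by total degree, and then combine this with the self-adjointness of $D_\pi$ established in the previous proposition to pin down the eigenspace structure of $W^\pi_p$.

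First I would check that $D_\pi$ sends $\Pol(X,V_\pi)$ into itself and in fact preserves each filtration subspace $\Pol_{\leq q}(X,V_\pi)$ of polynomials of degree at most $q$. Examining \eqref{eq:DefinitionPsi} term by term: $r\Delta$ reduces degree by $2$; the coefficients $r(e_ie_j|v)$ in $\Psi_\pi$ are linear in $v$ so that the quadratic part of $\Psi_\pi$ reduces degree by $1$, while the first-order part of $\Psi_\pi$ has constant coefficients and reduces degree by $1$; and $E(E+\alpha-1)$ preserves the decomposition into homogeneous components, acting by the scalar $k(k+\alpha-1)$ on the component of degree $k$. Consequently, for any $P\in\Pol_{\leq p}(X,V_\pi)$ with homogeneous top part $P_p$ of degree $p$, the polynomial
\[ (D_\pi + p(p+\alpha-1))\,P \]
has degree at most $p-1$, because the only contribution of degree $p$ comes from $-E(E+\alpha-1)P_p=-p(p+\alpha-1)P_p$, which is cancelled.

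Next I would use self-adjointness to show that $D_\pi$ preserves $W^\pi_p$. Since the previous proposition provides a self-adjoint extension of $D_\pi$ (through the Casimir of the unitary representation $S_\pi$), the operator $D_\pi$ is in particular symmetric on the space of polynomials. Combined with the first step applied at level $p-1$, which says that $D_\pi$ preserves the subspace $\Pol_{\leq p-1}(X,V_\pi)$, for any $P\in W^\pi_p$ and any $Q\in\Pol_{\leq p-1}(X,V_\pi)$ one has
\[ (D_\pi P\,|\,Q)_{L^2_\pi(X)} = (P\,|\,D_\pi Q)_{L^2_\pi(X)} = 0, \]
so $D_\pi P\in\Pol_{\leq p}(X,V_\pi)\cap\Pol_{\leq p-1}(X,V_\pi)^\perp = W^\pi_p$.

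To finish, for $P\in W^\pi_p$ set $R:=D_\pi P + p(p+\alpha-1)P$. By the second step and since $-p(p+\alpha-1)P\in W^\pi_p$, we have $R\in W^\pi_p$. By the first step, $R\in\Pol_{\leq p-1}(X,V_\pi)$. But the defining condition of $W^\pi_p$ forces $W^\pi_p\cap\Pol_{\leq p-1}(X,V_\pi)=\{0\}$, so $R=0$ and $D_\pi P=-p(p+\alpha-1)P$, as desired. The only subtle point is the symmetry identity $(D_\pi P\,|\,Q)_{L^2_\pi(X)}=(P\,|\,D_\pi Q)_{L^2_\pi(X)}$ on polynomials, which I would justify from the self-adjoint extension guaranteed by the previous proposition, using that polynomials lie in the domain of $D_\pi$ thanks to the boundedness of $X$ and the smoothness of $\pi\bigl((e+v)^{1/2}\bigr)^{-1}$ and $\Delta(e+v)^{-n/r}$ on $X$.
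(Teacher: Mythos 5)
Your proof is correct and takes essentially the same approach as the paper: the degree-nonincreasing action of $D_\pi$ on polynomials with top-order term governed by $-E(E+\alpha-1)$, combined with self-adjointness to get invariance of $W_p^\pi$, then reading off the eigenvalue from the degree-$p$ homogeneous part. Your conclusion is even marginally more direct, since you show $(D_\pi+p(p+\alpha-1))P\in W_p^\pi\cap\Pol_{\leq p-1}(X,V_\pi)=\{0\}$ for every $P\in W_p^\pi$, whereas the paper invokes diagonalizability of the self-adjoint operator on $W_p^\pi$ and evaluates the eigenvalue on each eigenvector.
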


\begin{proof}
Since $D_\pi$ is a differential operator with polynomial coefficients, it clearly acts on $\Pol(X,V_\pi)$. It is also immediate from the expression for $D_\pi$ that $\deg(D_\pi P)\leq\deg(P)$ for all $P\in\Pol(X,V_\pi)$. Since $D_\pi$ is self-adjoint on $L^2_\pi(X)$, it follows that each $W_p^\pi$ ($p\geq0$) is invariant under $D_\pi$. Moreover, as a self-adjoint operator $D_\pi$ is diagonalizable on $W_p^\pi$. Let $P\in W_p^\pi$ be an eigenvector with eigenvalue $\mu$ and write $P=P_1+P_2$ with $P_1$ homogeneous of degree $p$ and $\deg(P_2)<p$. Then the highest order term in $D_\pi P$ is
\[-E(E-\alpha-1)P_1=-p(p+\alpha-1)P_1,\]
so $\mu=-p(p-1+\alpha)$. 
\end{proof}

Finally, this gives the following branching law, symmetry breaking and holographic operators:

\begin{theorem}\label{thm:BranchingSL2VectorValued}
Let $S_\pi$ a holomorphic discrete series representation of $G$.
\begin{enumerate}[label=(\alph*)]
\item\label{thm:BranchingSL2VectorValued1} The restriction of $S_\pi$ to the subgroup $\PSL_2(\R)$ decomposes as
\[S_\pi|_{\PSL_2(\R)}\simeq {\sum_{p\geq 0}}^\oplus\bigg[\binom{n+p-2}{n-2}\dim V_\pi\bigg]\cdot\rho_{\alpha+2p}.\]
\item\label{thm:BranchingSL2VectorValued2} For any $P\in \Pol(X,V_\pi)$ of degree $p$, the operator
\begin{multline*}
	\phi_\pi^{p}(P):L^2_{\alpha}(\R^+)\hatotimes L^2_\pi(X)\to L^2_{\alpha+2p}(\R^+),\\	
	\phi_\pi^{p}(P)f(t)=t^{-p}\int_X \langle \Gamma_{\pi,X} \pi((e+v)^\frac{1}{2})^{-1}f(t,v),\pi((e+v)^\frac{1}{2})^{-1}P(v)\rangle \Delta(e+v)^{-\frac{n}{r}}~dv.
\end{multline*}
is a symmetry breaking operator between $S_\pi$ and $\rho_{\alpha+2p}$ if and only if $P\in W^\pi_p$.
\item\label{thm:BranchingSL2VectorValued3} For any $P\in \Pol(X,V_\pi)$ of degree $p$, the operator
\begin{equation*}
	\Phi^\pi_p(P):L^2_{\alpha+2p}(\R^+) \to L^2_{\alpha}(\R^+)\hatotimes L^2_\pi(X), \quad \Phi^\pi_p(P)f(t)=t^pf(t)P(v).
\end{equation*}
is a holographic operator between $\rho_{\alpha+2p}$ and $S_\pi$ if and only if $P\in W^\pi_p$.
\end{enumerate}
\end{theorem}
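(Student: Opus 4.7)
The plan is to combine the stratified isomorphism $L^2_\pi(\Omega)\simeq L^2_\alpha(\R^+)\hatotimes L^2_\pi(X)$ of the previous proposition with the spectral decomposition of the self-adjoint operator $D_\pi$ provided by Proposition~\ref{prop:DefinitionWp}. The eigenspaces $W_p^\pi$ are pairwise orthogonal by construction and their algebraic sum equals $\Pol(X,V_\pi)$, which is dense in $L^2_\pi(X)$ because $X$ is bounded with a smooth weight. This gives the Hilbert-space decomposition $L^2_\pi(X) = {\sum_{p\geq 0}}^\oplus W_p^\pi$, and therefore
\[
L^2_\pi(\Omega) \simeq {\sum_{p\geq 0}}^\oplus L^2_\alpha(\R^+)\hatotimes W_p^\pi
\]
as $\PSL_2(\R)$-Hilbert spaces.

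The key step is to identify the $p$-th summand with a multiple of $\rho_{\alpha+2p}$ through the unitary map
\[
U_p:L^2_{\alpha+2p}(\R^+)\hatotimes W_p^\pi\longrightarrow L^2_\alpha(\R^+)\hatotimes W_p^\pi,\quad f\otimes P\longmapsto t^p f(t)P(v),
\]
whose unitarity comes from the weight identity $t^{\alpha+2p-1}\,dt=t^{2p}\cdot t^{\alpha-1}\,dt$. The upper-triangular and diagonal $\mathfrak{sl}_2(\R)$-generators act compatibly on both sides directly from \eqref{eq:StratifiedActionPSL2a}--\eqref{eq:StratifiedActionPSL2b} and \eqref{eq:HolDsActionPSL2a}--\eqref{eq:HolDsActionPSL2b}. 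The nontrivial verification is for the lower-triangular generator, whose stratified action $i(\mathcal{B}_\alpha^\R+t^{-1}D_\pi)$ is given by Proposition~\ref{prop:BesselOperator(t,v)VectorValued}. A direct derivative computation using $\mathcal{B}_\alpha^\R=t\frac{d^2}{dt^2}+\alpha\frac{d}{dt}$ and the fact that $D_\pi$ involves only $v$-derivatives yields the operator identity
\[
(\mathcal{B}_\alpha^\R+t^{-1}D_\pi)\bigl(t^p f(t)P(v)\bigr)=t^p(\mathcal{B}_{\alpha+2p}^\R f)(t)\cdot P(v)+t^{p-1}f(t)\bigl[p(p+\alpha-1)P(v)+D_\pi P(v)\bigr],
\]
whose bracket vanishes exactly when $P\in W_p^\pi$ by Proposition~\ref{prop:DefinitionWp}, establishing the intertwining.

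Part \ref{thm:BranchingSL2VectorValued1} now follows, with multiplicity $\dim W_p^\pi=\binom{n+p-2}{n-2}\dim V_\pi$: as $W_p^\pi$ is a complement of $\{\deg<p\}$ in $\{\deg\leq p\}$, its dimension matches that of the $V_\pi$-valued homogeneous degree-$p$ polynomials on the $(n-1)$-dimensional space $(\R\cdot e)^\perp$. For \ref{thm:BranchingSL2VectorValued3}, the if direction is the identification $\Phi_p^\pi(P)=U_p\circ(\id\otimes P)$; the only if direction follows by decomposing a degree-$\leq p$ polynomial as $P=\sum_{q\leq p}P_q$ with $P_q\in W_q^\pi$ and applying the identity above: for each $q<p$ with $P_q\neq 0$ one obtains a residual intertwining obstruction $(p(p+\alpha-1)-q(q+\alpha-1))t^{p-1}f(t)P_q(v)$ lying in the orthogonal summand $L^2_\alpha(\R^+)\hatotimes W_q^\pi$, which cannot be cancelled and forces $P_q=0$. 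Finally, \ref{thm:BranchingSL2VectorValued2} follows from \ref{thm:BranchingSL2VectorValued3} upon recognizing $\phi_\pi^p(P)$ as (a scalar multiple of) the Hilbert-space adjoint of $\Phi_p^\pi(P)$, so the symmetry breaking and holographic intertwining properties are equivalent conditions on $P$. The main computational obstacle is the operator identity in the previous paragraph; once in hand, the branching law, the multiplicity count, and the explicit intertwiners all reduce to bookkeeping.
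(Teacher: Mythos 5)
Your proposal is correct and, in substance, follows the same route as the paper: the stratified model, the eigenspaces $W_p^\pi$ of $D_\pi$ from Proposition~\ref{prop:DefinitionWp}, the map $f\otimes P\mapsto t^pf(t)P(v)$ whose compatibility with the lower-triangular generator is exactly the operator identity you compute (this is the paper's ``short computation''), and adjointness to get part~\ref{thm:BranchingSL2VectorValued2} from part~\ref{thm:BranchingSL2VectorValued3}. The one step you do differently is the exhaustion argument in part~\ref{thm:BranchingSL2VectorValued1}: the paper shows the images of the $\Phi_p^\pi(P)$ are dense by identifying the $K$-finite vectors $Q(t,tv)e^{-t}$ of $S_\pi$ in stratified coordinates and matching them with the $(K\cap\PSL_2(\R))$-finite vectors $t^pQ(t,v)e^{-t}$ of the summands, whereas you deduce $L^2_\pi(X)={\sum_{p\geq0}}^\oplus W_p^\pi$ from density of $\Pol(X,V_\pi)$ in $L^2_\pi(X)$; both are valid, and yours is arguably more elementary, while the paper's makes the underlying $(\g,K)$-module structure visible. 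One small correction to your justification: the weight is not smooth up to the boundary (the density $\pi((e+v)^{1/2})^{-1}$, $\Delta(e+v)^{-\frac{n}{r}}$ degenerates at $\partial X$); what you should invoke is that the measure defining $L^2_\pi(X)$ is finite (this follows by comparison with the convergent integral defining $\Gamma_{\pi,X}$, since $\Delta(e+v)$ is bounded on $X$) and compactly supported, so polynomials are dense by uniform approximation of continuous functions on $\overline{X}$. With that fixed, your multiplicity count $\dim W_p^\pi=\binom{n+p-2}{n-2}\dim V_\pi$ and your ``only if'' argument, which rests on the eigenvalues $-q(q+\alpha-1)$ being pairwise distinct, give exactly what the theorem asserts.
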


\begin{proof}
Since $W_p^\pi$ is an eigenspace for $D_\pi$, the space $L^2_\alpha(\R^+)\otimes W_p^\pi$ is an eigenspace for the action of the Casimir element $dS_\pi(C)$ of $\PSL_2(\R)$. Hence, $L^2_\alpha(\R^+)\otimes W_p^\pi$ is a subrepresentation of $S_\pi|_{\PSL_2(\R)}$. By \eqref{eq:StratifiedActionPSL2a}, \eqref{eq:StratifiedActionPSL2b} and Proposition~\ref{prop:BesselOperator(t,v)VectorValued}, $\PSL_2(\R)$ only acts in the variable $t$ on each subrepresentation $L^2_\alpha(\R^+)\otimes W_p^\pi$, so for fixed $P\in W_p^\pi$ we consider the map $\Phi^\pi_p(P):L^2_{\alpha+2p}(\R^+) \to L^2_{\alpha}(\R^+)\hatotimes L^2_\pi(X)$ from \ref{thm:BranchingSL2VectorValued3}. A short computation using \eqref{eq:StratifiedActionPSL2a}, \eqref{eq:StratifiedActionPSL2b} and Proposition~\ref{prop:BesselOperator(t,v)VectorValued} as well as \eqref{eq:HolDsActionPSL2a}, \eqref{eq:HolDsActionPSL2b} and \eqref{eq:HolDsActionPSL2c} shows that this map is intertwining for $\rho_{\alpha+2p}$ and $S_\pi|_{\PSL_2(\R)}$ if and only if $P\in W^\pi_p$, so \ref{thm:BranchingSL2VectorValued3} follows. Since $\phi_\pi^p(P)$ is the adjoint of $\Phi_\pi^p(P)$, this also shows \ref{thm:BranchingSL2VectorValued2}.

Finally, the $K$-finite vectors in $L^2_\pi(\Omega)$ are of the form $P(x)e^{-\tr(x)}$ with $P$ a polynomials in $\Pol(\Omega,V_\pi)$. Using the stratification map, the $K$-finite vectors becomes $Q(t,tv)e^{-t}$ where $Q$ is a polynomial on $\R\times(\R\cdot e)^\perp$. On $L^2_{\alpha+2p}(\R^+)$ the $(K\cap\PSL_2(\R))$-finite vectors are of the form $R(t)e^{-t}$ with $R$ a polynomial on $\R$, so the $(K\cap\PSL_2(\R))$-finite vectors on each $L^2_\alpha(\R^+)\hatotimes W_p^\pi$ are of the form $t^pQ(t,v)e^{-t}=Q(t,tv)e^{-t}$ with $Q$ a polynomial in $\C[t]\otimes W_p^\pi$. This shows that the sum of all the images of $\Phi_p^\pi(P)$, $P\in W_p^\pi$, $p\geq0$, contains the $K$-finite vectors of $S_\pi$, hence it is dense in $L^2_\pi(\Omega)$. Together with the fact that $\dim(W_p^\pi)=\dim(V_\pi)=\binom{n+k-2}{n-2}$ this shows \ref{thm:BranchingSL2VectorValued1}.
\end{proof}

\subsection{Branching to $\PSL_2(\R)\times H$}\label{sec:BranchingPSL2RxH}

Recall that the group $H$ acts on $L^2_\pi(X)$ by \eqref{eq:StratfiedActionH}.

\begin{lemme}
The operator $D_\pi$ commutes with the action of $H$ on $L^2_\pi(X)$.
\end{lemme}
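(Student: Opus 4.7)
The plan is to deduce the commutation from the dual-pair relation between $\PSL_2(\R)$ and $H$ inside $G$, combined with the identification $dS_\pi(d\psi(C)) = \alpha(\alpha-1)\id - 4D_\pi$ established in the previous proposition. By construction, $H$ is contained in the centralizer of $\psi(\PSL_2(\R))$ in $G$, so the unitary operators $S_\pi(k)$ with $k\in H$ commute with every $S_\pi(\psi(g))$, $g\in\PSL_2(\R)$, on $L^2_\pi(\Omega)$. Differentiating the $\PSL_2(\R)$-factor and taking the Casimir, we conclude that $dS_\pi(d\psi(C))$ commutes with every $S_\pi(k)$ on smooth vectors.

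Since $\alpha(\alpha-1)$ is a scalar, this commutation is equivalent to $[D_\pi, S_\pi(k)] = 0$ on smooth vectors of $L^2_\alpha(\R^+) \hatotimes L^2_\pi(X)$ for every $k\in H$. To transfer the statement to $L^2_\pi(X)$ alone, I would invoke the explicit formula for $D_\pi$ in Proposition~\ref{prop:BesselOperator(t,v)VectorValued} together with \eqref{eq:StratfiedActionH}: both operators act purely in the variable $v$, with no $t$-derivatives and no $t$-dependent coefficients. Evaluating the commutator on an elementary tensor $\varphi(t)P(v)$ with a fixed nonzero $\varphi$ isolates the $v$-action and yields the commutation of $D_\pi$ with the $H$-action on $\Pol(X,V_\pi)$. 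The self-adjoint extension of $D_\pi$ provided by the preceding proposition, combined with the unitarity of the $H$-action, then extends the commutation to all of $L^2_\pi(X)$ by a standard density argument.

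The proof is essentially forced by the dual-pair property, so there is no real obstacle. The only mildly technical point is the descent from the tensor product to the $v$-factor, which is routine bookkeeping once one observes that neither $D_\pi$ nor the $H$-action involves the $t$-variable. Alternatively, one could verify the commutation by direct inspection of the three summands $r\Delta$, $\Psi_\pi$ and $-E(E+\alpha-1)$ in the decomposition of $D_\pi$, using the fact that $H$ preserves the trace form and acts by Jordan algebra automorphisms on $(\R\cdot e)^\perp$, but the Casimir argument is conceptually cleaner and avoids any case analysis on the individual terms.
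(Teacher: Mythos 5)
Your proposal is correct and follows essentially the same route as the paper: since $H$ centralizes $\psi(\PSL_2(\R))$ in $G$, the operators $S_\pi(k)$ commute with the $\mathfrak{sl}_2(\R)$-action, hence with the Casimir $dS_\pi(d\psi(C))=\alpha(\alpha-1)-4D_\pi$, which forces $[D_\pi,S_\pi(k)]=0$. The paper leaves the descent from the tensor product $L^2_\alpha(\R^+)\hatotimes L^2_\pi(X)$ to the $v$-factor implicit, and your bookkeeping of that step (plus the density/self-adjointness remark) is a harmless elaboration rather than a different argument.
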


\begin{proof}
Since $H$ and $\PSL_2(\R)$ commute in $G$, their actions $S_\pi(k)$ ($k\in H$) and $dS_\pi(d\psi(T))$ ($T\in\mathfrak{sl}_2(\R)$) commute as well.
\end{proof}

It follows that the space $W_p^\pi$ is a representation of the compact group $H$. Since $W_p^\pi$ is difficult to work with, we relate it to the space $\Pol_p(X,V_\pi)$ of homogeneous polynomials  of degree $p$ with values in $V_\pi$ on which $H$ acts by
$$ (k\cdot P)(v) = \pi(k)P(k^{-1}v) \qquad (k\in H, P\in\Pol_p(X,V_\pi),v\in X). $$
Then any polynomial $P\in\Pol(X,V_\pi)$ of degree $p$ can be decomposed into $P=\sum_{i=0}^p P_i$ with $P_i\in \Pol_i(X,V_\pi)$ its homogeneous part of degree $i$.

\begin{prop}
The map $T:W_p^\pi \to \Pol_p(X,V_\pi)$ defined by $T(P)=P_p$ is an $H$ intertwining isomorphism. For $Q\in \Pol_p(X,V_\pi)$, the polynomial $P=T^{-1}(Q)$ is uniquely determined by the following recursion formula:
\[
\Delta P_{i+2}+\Psi_\pi P_{i+1}=\left( i(i+\alpha-1)-p(p+\alpha-1)\right) P_i \qquad (0\leq i\leq p-1),
\]  
where $\Delta$ is the Laplacian on $X$ and $\Psi_\pi$ the operator defined in \eqref{eq:DefinitionPsi}.
\end{prop}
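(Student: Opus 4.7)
The plan is to verify the three claims of the proposition in order: $H$-equivariance, bijectivity, and the explicit recursion for $T^{-1}$. All three come out of a graded-degree analysis using the eigenvalue characterization of $W_p^\pi$ established in Proposition~\ref{prop:DefinitionWp}.

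For $H$-equivariance, I would observe that $H$ acts linearly on $X$, so the action $(k \cdot P)(v) = \pi(k) P(k^{-1} v)$ preserves the grading of $\Pol(X, V_\pi)$ by homogeneous degree. In particular the decomposition $P = \sum_i P_i$ commutes with the $H$-action, and since $W_p^\pi$ is $H$-stable (because $H$ preserves both the $L^2_\pi(X)$-inner product and the filtration by degree), the projection onto the top-degree component is an $H$-map.

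For bijectivity and the recursion, the crucial step is to decompose $D_\pi = r\Delta + \Psi_\pi - E(E+\alpha-1)$ according to how its summands shift the degree: $\Delta$ drops degree by $2$, $\Psi_\pi$ by $1$, and $E(E+\alpha-1)$ preserves degree and acts on $P_i$ as the scalar $i(i+\alpha-1)$. Writing $P = \sum_{i=0}^p P_i$ and separating the eigenvalue equation $D_\pi P = -p(p+\alpha-1) P$ from Proposition~\ref{prop:DefinitionWp} into its homogeneous components then yields exactly the stated recursion (the factor $r$ in front of $\Delta$ is an innocuous normalization choice). Injectivity of $T$ is immediate from the orthogonality defining $W_p^\pi$: if $T(P) = 0$ then $\deg P < p$, so $(P|P) = 0$ by definition of $W_p^\pi$, and positive definiteness of the inner product gives $P = 0$. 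For surjectivity, given $Q \in \Pol_p(X, V_\pi)$, I would set $P_p = Q$ and apply the recursion downward to define $P_{p-1}, \ldots, P_0$ successively; the resulting polynomial $P$ then satisfies the eigenvalue equation by construction, hence lies in $W_p^\pi$ by Proposition~\ref{prop:DefinitionWp}, and clearly $T(P) = Q$.

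The one technical point requiring attention is nonvanishing of the coefficient inverted at each step of the recursion. This coefficient factors as $p(p+\alpha-1) - i(i+\alpha-1) = (p-i)(p+i+\alpha-1)$, which is nonzero for $0 \leq i < p$ as soon as $\alpha > 0$; positivity of $\alpha$ is guaranteed by the discrete series hypothesis $\omega(\pi) > \tfrac{2n}{r} - 1$. Beyond this small verification, the argument is a straightforward linear-algebraic unfolding of the eigenvalue property of $W_p^\pi$, so no genuine obstacle arises.
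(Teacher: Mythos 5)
Your argument is correct and follows the same basic strategy as the paper (degree-shift decomposition of $D_\pi$, eigenvalue equation from Proposition~\ref{prop:DefinitionWp}, injectivity from orthogonality), but it differs at one point: for surjectivity the paper simply counts dimensions, using $\Pol(X,V_\pi)_{\deg\leq p}=\Pol(X,V_\pi)_{\deg<p}\oplus W_p^\pi$ to get $\dim W_p^\pi=\dim\Pol_p(X,V_\pi)$, whereas you construct the preimage explicitly by solving the recursion downward from $P_p=Q$. Your route has the advantage of making the ``uniquely determined'' clause and the invertibility of the coefficient $(p-i)(p+i+\alpha-1)$ explicit, a point the paper leaves implicit; the paper's route is shorter and avoids any discussion of the coefficients. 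Two caveats. First, when you conclude that the constructed $P$ lies in $W_p^\pi$ ``by Proposition~\ref{prop:DefinitionWp}'', note that this proposition only states the inclusion $W_p^\pi\subseteq\ker\big(D_\pi+p(p+\alpha-1)\big)$; the converse (for polynomials of degree $\leq p$) is what you need, and it follows from the orthogonal decomposition $\Pol(X,V_\pi)_{\deg\leq p}=\bigoplus_{q\leq p}W_q^\pi$ together with the distinctness of the eigenvalues $-q(q+\alpha-1)$ --- exactly the nonvanishing $(p-q)(p+q+\alpha-1)\neq0$ you already verify --- so the gap is cosmetic, but you should say it (alternatively, the paper's dimension count plus your injectivity closes it at once). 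Second, the homogeneous component of degree $i$ of the eigenvalue equation actually reads $r\Delta P_{i+2}+\Psi_\pi P_{i+1}=(i(i+\alpha-1)-p(p+\alpha-1))P_i$, since $D_\pi=r\Delta+\Psi_\pi-E(E+\alpha-1)$; the missing factor $r$ in the statement (and in the paper's own restatement of $D_\pi$ in its proof) is an inconsistency of the paper, and your remark that it is a harmless normalization is the right way to treat it.
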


\begin{proof}
It is clear that $T$ is an intertwining operator since the action of $H$ is given by the same formula on $W_p^\pi$ and $\Pol_p(X,V_\pi)$ and this formula preserved the degree of homogeneity. The map $T$ is injective since $T(P)=0$ implies $\deg(P)<p$, and every $P\in W_p^\pi$ is orthogonal to all polynomials of degree $<p$. Furthermore we have $\dim (W_p^\pi)=\dim(\Pol_p(X,V_\pi))$ so $T$ is a bijection.

From Proposition \ref{prop:DefinitionWp} we know that $P=\sum_{i=1}^p P_i\in W_p^\pi$ is equivalent to $D_\pi P=-p(p+\alpha-1)P$. Recalling that $D_\pi=\Delta_v+\Psi_\pi-E(E+\alpha-1)$, this is equivalent to the claimed recursion formula.
\end{proof}

By the previous result, we have the following isomorphism of $H$-representations:
$$ W_p^\pi \simeq \Pol_p(X,V_\pi)\simeq \Pol_p(X)\otimes V_\pi. $$
So to decompose $W_p^\pi$ into irreducible representations of $H$ one has to decompose $\Pol_p(X)$ and $V_\pi|_H$ into irreducible representations and then decompose the corresponding tensor products. This implies the following branching law:

\begin{theorem}\label{thm:BranchingGeneralVectorValued}
For every holomorphic discrete series representation $S_\pi$, we have the following branching rule:
\begin{equation*}
S_\pi|_{\PSL_2(\R)\times H} \simeq {\sum_{p\in \N}}^\oplus \rho_{\alpha+2p}\otimes\big(\Pol_p(X)\otimes V_\pi\big).
\end{equation*}
\end{theorem}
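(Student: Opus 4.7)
The plan is to assemble the branching rule by combining the three ingredients already established in the subsection. The starting point is Theorem~\ref{thm:BranchingSL2VectorValued}\ref{thm:BranchingSL2VectorValued1}, which says that under $S_\pi$ the restriction to $\PSL_2(\R)$ decomposes into copies of $\rho_{\alpha+2p}$ with multiplicity space (using the stratified realization) given by $W_p^\pi$. More precisely, the holographic operators $\Phi_p^\pi(P)$ of part~\ref{thm:BranchingSL2VectorValued3} give a unitary $\PSL_2(\R)$-equivariant isomorphism
\[
S_\pi|_{\PSL_2(\R)} \;\simeq\; {\sum_{p\geq 0}}^\oplus \rho_{\alpha+2p}\otimes W_p^\pi,
\]
where $\PSL_2(\R)$ acts trivially on the multiplicity space $W_p^\pi$.

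Next I would promote this to a $\PSL_2(\R)\times H$-equivariant statement. Since $H$ and $\PSL_2(\R)$ commute in $G$, the operators $S_\pi(k)$ for $k\in H$ commute with the Casimir and with the one-parameter subgroups of $\PSL_2(\R)$; equivalently, by the Lemma in Section~\ref{sec:BranchingPSL2RxH}, $D_\pi$ commutes with the $H$-action on $L^2_\pi(X)$. Hence each eigenspace $W_p^\pi$ of $D_\pi$ is stable under $H$, and the decomposition above is in fact an isomorphism of $\PSL_2(\R)\times H$-representations with $H$ acting on the multiplicity space $W_p^\pi$ via \eqref{eq:StratfiedActionH}.

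Finally, I would apply the Proposition that constructs the $H$-intertwining isomorphism $T:W_p^\pi \xrightarrow{\sim} \Pol_p(X,V_\pi)$ sending a polynomial to its top-degree homogeneous part. Composing with the obvious identification $\Pol_p(X,V_\pi)\simeq \Pol_p(X)\otimes V_\pi$ of $H$-representations (where $H$ acts diagonally), one obtains
\[
W_p^\pi \;\simeq\; \Pol_p(X)\otimes V_\pi
\]
as $H$-representations. Substituting into the previous display yields the stated branching rule.

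The substantive content has already been done in the preceding lemmas, propositions and in Theorem~\ref{thm:BranchingSL2VectorValued}; the only thing left for the proof is to string these isomorphisms together and explicitly record that the $\PSL_2(\R)$- and $H$-actions really do commute on the stratified model, which is immediate from the commutation inside $G$. There is no serious obstacle: the proof is essentially a two-line invocation of the commutativity of $\PSL_2(\R)$ and $H$ together with the previous proposition identifying the multiplicity space.
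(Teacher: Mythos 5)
Your proposal is correct and follows essentially the same route as the paper: combine Theorem~\ref{thm:BranchingSL2VectorValued} (which exhibits $W_p^\pi$ as the multiplicity space of $\rho_{\alpha+2p}$), the lemma that $D_\pi$ commutes with the $H$-action (so each $W_p^\pi$ is $H$-stable and the decomposition is $\PSL_2(\R)\times H$-equivariant), and the proposition giving the $H$-isomorphism $T:W_p^\pi\to\Pol_p(X,V_\pi)\simeq\Pol_p(X)\otimes V_\pi$. This is exactly how the paper deduces the theorem.
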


\begin{example}
	Since $\Pol_p(X)=S^p(U)$, where $U^*=(\R\cdot e)_\C^\perp$, the decomposition of $\Pol_p(X)$ is the classical problem of decomposing $S^p(U)$ for the following representation $U$ of $H$:
	\begin{enumerate}
		\item $V=\operatorname{Sym}(n,\R)$. Here $\mathfrak{h}_\C=\mathfrak{so}(n,\C)$ is acting on $V=\operatorname{Sym}(n,\C)$ by conjugation and  $U^*=\{X\in\operatorname{Sym}(n,\C):\tr(X)=0\}$ which can be identified with $S^2_0(\C^n)$, the trace-free part of the second symmetric power of the standard representation of $\mathfrak{so}(n)$ on $\C^n$. This representation is irreducible and self-dual, so $U=S^2_0(\C^n)$. Its highest weight in terms of the fundamental weights $\omega_1,\ldots,\omega_{\lfloor\frac{n}{2}\rfloor}$ is $2\omega_1$.
		\item $V=\operatorname{Herm}(n,\C)$. Here $\mathfrak{h}_\C=\mathfrak{sl}(n,\C)$ is acting on $V=M(n\times n,\C)$ by conjugation and $U^*=\{X\in M(n\times n,\C):\tr(X)=0\}$ which can be identified with $(\C^n\otimes(\C^n)^*)_0$, the trace-free part of the tensor product of the standard representation of $\mathfrak{sl}(n,\C)$ on $\C^n$ and its dual. Clearly, $\C^n\otimes(\C^n)^*$ is self-dual, so $U=(\C^n\otimes(\C^n)^*)_0$. This representation is irreducible and its highest weight in terms of the fundamental weights $\omega_1,\ldots,\omega_{n-1}$ is $\omega_1+\omega_{n-1}$.
		\item $V=\operatorname{Herm}(n,\mathbb{H})$. Here $\mathfrak{h}_\C=\mathfrak{sp}(n,\C)$ is acting on $V_\C=\{X\in\operatorname{Skew}(2n,\mathbb{C})\}$ by conjugation, so $U^*$ can be identified with $(\C^{2n}\otimes(\C^{2n})^*)_0$, the trace-free part of the tensor product of the standard representation of $\mathfrak{sp}(n,\C)$ on $\C^{2n}$ and its dual. Since the standard representation is self-dual, we find $U=S^2_0(\C^{2n})$. This representation is irreducible and its highest weight in terms of the fundamental weights $\omega_1,\ldots,\omega_n$ is $2\omega_1$.
		\item $V=\operatorname{Herm}(3,\mathbb{O})$. Here $\mathfrak{h}_\C=\mathfrak{f}_4(\C)$ is acting on the $26$-dimensional space $U^*$. Since this is the smallest dimension of a non-trivial irreducible representation, and there is precisely one irreducible representation of this dimension, we find that $U$ is the fundamental representation of $\mathfrak{f}_4(\C)$ with highest weight $\omega_1$ in terms of the fundamental weights $\omega_1,\omega_2,\omega_3,\omega_4$.
		\item $V=\R\times\R^{n-1}$. Here $\mathfrak{h}_\C=\mathfrak{so}(n-1,\C)$ is acting on $U^*=\{0\}\times\C^{n-1}=\C^{n-1}$ by the standard representation which is self-dual, so $U^*$ can be identified with $\C^{n-1}$. This representation is irreducible and its highest weight in terms of the fundamental weights $\omega_1,\ldots,\omega_{\lfloor\frac{n-1}{2}\rfloor}$ is $\omega_1$.
	\end{enumerate}
\end{example}

To also state formulas for the symmetry breaking and holographic operators, we decompose $W_p^\pi$ into irreducible representations $F_p^j$:
\[W_p^\pi=\bigoplus_j F_p^j.\]
We also denote by $F_p^j$ the action of $H$ on the vector space $F_p^j$, more precisely:
\[
F_p^j(k)P(v)=\pi(k)P(k^{-1}\cdot v).
\]
Note that this decomposition is not unique since $F_p^j$ might occur with higher multiplicity in $W_p^\pi$.

For every $v\in X$ and $\xi\in V_\pi$, the linear form $F_p^j\to\C,\,P\mapsto\langle P(v),\xi\rangle$ is represented by a vector $K_v\xi\in F_p^j$, i.e. $\langle P(v),\xi\rangle=\langle P,K_v\xi\rangle$. We write $K(u,v)\xi=(K_v\xi)(u)$. Since $K(u,v)\xi$ is linear in $\xi$, we have $K_p^j(u,v)=K(u,v)\in\operatorname{End}(V_\pi)$. This function $K_p^j:X\times X\to\operatorname{End}(V_\pi)$ satisfies
\begin{equation}\label{eq:InterwiningPropertyKernel}
	K_p^j(k\cdot u,k\cdot v)=\pi(k)K_p^j(u,v)\pi(k)^{-1} \qquad (u,v\in X,k\in H).
\end{equation}

\begin{theorem}\label{thm:SymBreakProduct}
	\begin{enumerate}[label=(\alph*)]
		\item The operator $\phi_\pi^{p,j}:L^2_\pi(\Omega)\to L^2_{\alpha+2p}(\R^+)\otimes F_p^j$ defined by
		\[
			\phi_\pi^{p,j}f(t,v)=t^{-p}\int_X K_p^{j}(u,v)f(t,u) \Delta(e+u)^{-\frac{n}{r}}~du,
		\]
		is a symmetry breaking operator between $S_\pi|_{\PSL_2(\R)\times H}$ and $ \rho_{\alpha+2p}\otimes F_p^j $.
		\item The operator $\Phi^\pi_{p,j}:L^2_{\alpha+2p}(\R^+)\otimes F_p^j\to L^2_\pi(\Omega)$ defined by
		\[
			\Phi^\pi_{p,j}f(t,v)=t^p f(t,v),
		\]
		is a holographic operator between $\rho_{\alpha+2p}\otimes F_p^{j}$ and $S_\pi|_{\PSL_2(\R)\times H}$.
	\end{enumerate}
\end{theorem}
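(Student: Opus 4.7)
The strategy is to reduce both parts to Theorem~\ref{thm:BranchingSL2VectorValued} applied to individual polynomials $P\in F_p^j\subseteq W_p^\pi$, and to handle the $H$-equivariance separately via the intertwining property \eqref{eq:InterwiningPropertyKernel} of the reproducing kernel. Up to the global scalar relating the norm on $L^2_\pi(\Omega)$ to the stratified norm, the operators $\Phi^\pi_{p,j}$ and $\phi_\pi^{p,j}$ are adjoint to one another (the adjointness being precisely what the reproducing kernel encodes), so I expect to establish one direction cleanly and deduce the other.

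First I would address part (b). On a pure tensor $g\otimes P$ with $g\in L^2_{\alpha+2p}(\R^+)$ and $P\in F_p^j$, one has $\Phi^\pi_{p,j}(g\otimes P)(t,v)=t^pg(t)P(v)=\Phi^\pi_p(P)g(t,v)$, where $\Phi^\pi_p(P)$ is the $\PSL_2(\R)$-intertwining map from Theorem~\ref{thm:BranchingSL2VectorValued}(c). Thus $\Phi^\pi_{p,j}$ is simply the assembly of these maps as $P$ ranges over $F_p^j\subseteq W_p^\pi$, and the $\PSL_2(\R)$-equivariance is free. For the $H$-equivariance, $H$ acts trivially in the $t$-variable and on $F_p^j$ by $F_p^j(k)P(v)=\pi(k)P(k^{-1}v)$, while the target action on $L^2_\pi(\Omega)$ in stratified coordinates is given by \eqref{eq:StratfiedActionH}; both sides transform $t^pg(t)P(v)$ into $t^pg(t)\pi(k)P(k^{-1}v)$, so $\Phi^\pi_{p,j}$ is $H$-equivariant.

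For part (a), the key observation is that the reproducing kernel implements the orthogonal projection $\mathrm{pr}_{F_p^j}:L^2_\pi(X)\to F_p^j$, with the factors $\Gamma_{\pi,X}$ and $\pi((e+u)^{1/2})^{-1}$ appearing in the inner product of $L^2_\pi(X)$ being absorbed into the definition of $K_p^j$. Consequently $\phi_\pi^{p,j}f(t,\cdot)=t^{-p}\,\mathrm{pr}_{F_p^j}[f(t,\cdot)]$, which automatically confirms that $\phi_\pi^{p,j}$ takes values in $L^2_{\alpha+2p}(\R^+)\otimes F_p^j$ and exhibits it as the adjoint of $\Phi^\pi_{p,j}$ up to the scalar from the stratification proposition. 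Being the adjoint of an intertwining map between unitary representations, it is itself intertwining. For a direct verification of $\PSL_2(\R)$-equivariance that bypasses duality, one can fix an orthonormal basis $\{P_\ell\}$ of $F_p^j$, expand $\phi_\pi^{p,j}f(t,v)$ into a sum involving the scalar-valued operators $\phi_\pi^p(P_\ell)$ of Theorem~\ref{thm:BranchingSL2VectorValued}(b), and conclude termwise.

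The step I expect to require the most care is the direct verification of $H$-equivariance of $\phi_\pi^{p,j}$ from its integral formula. Starting from $\phi_\pi^{p,j}(S_\pi(k)f)(t,v)=t^{-p}\int_XK_p^j(u,v)\pi(k)f(t,k^{-1}u)\Delta(e+u)^{-n/r}\,du$, I would substitute $u=ku'$; since $k\in H=\Aut(V)$ preserves the trace form (hence Lebesgue measure on $(\R\cdot e)^\perp$) and the Jordan determinant, the measure $\Delta(e+u)^{-n/r}\,du$ is $k$-invariant. The intertwining property \eqref{eq:InterwiningPropertyKernel} then yields $K_p^j(ku',v)=\pi(k)K_p^j(u',k^{-1}v)\pi(k)^{-1}$; the factors $\pi(k)^{-1}$ and $\pi(k)$ cancel, and what remains is $\pi(k)\,\phi_\pi^{p,j}f(t,k^{-1}v)$, which is precisely the $H$-action on the target $L^2_{\alpha+2p}(\R^+)\otimes F_p^j$.
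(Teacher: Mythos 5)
Your proposal is correct and follows essentially the same route as the paper: the $\PSL_2(\R)$-intertwining property is reduced to Theorem~\ref{thm:BranchingSL2VectorValued} via the maps $\Phi_p^\pi(P)$ for $P\in F_p^j\subseteq W_p^\pi$, and the $H$-equivariance comes from the kernel property \eqref{eq:InterwiningPropertyKernel} together with the $H$-invariance of the measure $\Delta(e+u)^{-\frac{n}{r}}\,du$. The paper's proof is just a two-line citation of these same two ingredients, so your write-up is simply a more detailed version of the intended argument.
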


\begin{proof}
The intertwining property for the $\PSL_2(\R)$ action is a consequence of Theorem \ref{thm:BranchingSL2VectorValued}. The intertwining property for $H$ is a consequence of formula \eqref{eq:InterwiningPropertyKernel}.
\end{proof}

\subsection{The case $\mathfrak{g}=\mathfrak{so}(2,n+1)$}

We conclude by considering the special case where $V$ is a Euclidean Jordan algebra of rank $2$. More precisely, we have $V=\R\times \R^n$, and the Jordan multiplication is given by:
\[
(x,u)\cdot (y,v)=(xy+B(u,v),xv+yu),
\] 
where $B(u,v)$ denotes the usual inner product on $\R^n$. 
The inner product on the Euclidean Jordan algebra $V$ is given by:
\[(x|y)=\tr(xy)=2\sum x_i y_i,\]
for $x=\sum x_i f_i$, $y=\sum y_i f_i$ ($0\leq i\leq n$) where $\{f_i\}$ is the canonical basis on $\R\times \R^n$ which is not an orthonormal basis.
The symmetric cone is
\[\Omega=\{x\in V\ |\ Q_{1,n}(x)> 0,\ x_1>0\},\]
hence we have $L\simeq\SO_0(1,n)$ and $H=\SO(n)$. This leads to:
\[X=\{(0,v)\ |\ v\in\R^n,\ \|v\|<1 \}=B^n.\]
In this situation, it is well known that as an $\SO(n)$ representation
\[
\Pol_p (X)\simeq \bigoplus_{j=1}^{\lfloor p/2\rfloor} \mathcal{H}_{p-2j}^n,
\]
where $\mathcal{H}_p^n$ denotes the irreducible representation of $\SO(n)$ on the space of harmonic polynomials of degree $p$ in $n$ variables. Thus, to find the explicit abstract branching law one needs to deal with tensor products of the form
\[
\mathcal{H}_{p-2j}^n\otimes \pi.
\]
This can be decomposed explicitly for $\pi$ being a representation of the form $\mathcal{H}_k^{n+1}$ or a fundamental representation using the classical branching rules and the results in \cite{HoweTanWillenbring05}.

Since $f_i\cdot f_j=0$ for $i,j\geq 1$ and $i\neq j$, and $f_i^2=e$, the operator $D_\pi$ is explicitly given by:
\[
D_\pi=\sum_{i=1}^n\derptwo{ }{x_i}-\sum_{1\leq i,j\leq n}x_ix_j\derptwobis{ }{x_i}{x_j}-\alpha\sum_{i=1}^n x_i\derpone{ }{x_i}-2\sum_{i=1}^nd\pi(L(f_i))\derpone{ }{x_i}.
\]
Notice that if we consider the case where $\pi$ is a one dimensional character, i.e. $S_\pi$ is a scalar-valued holomorphic discrete series representation, then we recover the operator considered in \cite{Labriet22} for the pair $(SO_0(2,n),SO_0(2,n-p))$ for the special case $p=n-1$.

\begin{remax}
In this case, the pair $(\mathfrak{g},\mathfrak{g}')=(\mathfrak{so}(2,n+1),\mathfrak{so}(2,1)\oplus\mathfrak{so}(n))$ is a symmetric pair with the isomorphism $\mathfrak{so}(2,1)\simeq\mathfrak{sl}(2,\R)$. Hence, the abstract branching law is a special case of the more general result of Kobayashi in \cite[Lemma 8.8]{Kobayashi08}. His proof relies on the Hua--Kostant--Schmid formula for the space of polynomials $\Pol((e^\bot)^\C)$ under the action of $\mathfrak{k}\cap\mathfrak{g}'=\mathfrak{so}(2)\oplus\mathfrak{so}(n)$. In our approach, we took care of the $\mathfrak{so}(2)$-action before studying the action of $\mathfrak{so}(n)$ on $\Pol(e^\bot)$ and this corresponds to the grading of this space by homogeneous polynomials of a fixed degree.
\end{remax}


\end{document}